%eli     15 Nov.  2017

%%%%%%%%%%%%%%%%%%%%%%%%%%%%%%%%%%%%%%%%%%%%%%%%%%%%%%%%%%%%%%%%%
%%%%%    This is an A MS-LaTeX file   %%%%%%%%%%%%%%%%%%%%%%%%%%%%
%%%%%%%%%%%%%%%%%%%%%%%%%%%%%%%%%%%%%%%%%%%%%%%%%%%%%%%%%%%%%%%%%
%\documentclass{conm-p-l}
%\documentclass[a4paper]{amsart}
\documentclass[11pt]{amsart}
\usepackage{amsfonts}
\usepackage{amsmath}
\usepackage{amssymb,latexsym}
\usepackage[mathcal]{eucal}
\usepackage{amscd}

\input xy
\xyoption{all}

\usepackage[pdftex,bookmarks,colorlinks,breaklinks]{hyperref}

\oddsidemargin 0.1875 in \evensidemargin 0.1875in
\textwidth 6 in % Width of text line.
\textheight 230mm \voffset=-4mm

%%%%%%%%%%%%%%%%%%%%%%%%%%%%%%%%%%%%%%%%%%%%%%%%%%%%%%%%%%%%%%%%%
%%%%%               N E W C O M M A N D S                   %%%%%
%%%%%%%%%%%%%%%%%%%%%%%%%%%%%%%%%%%%%%%%%%%%%%%%%%%%%%%%%%%%%%%%%

\newcommand{\Ucal}{\mathcal{U}}

\newcommand{\ch}{\mathbf{1}}

\newcommand{\N}{\mathbb{N}}

\newcommand{\Xb}{\mathbf{X}}

\newcommand{\al}{\alpha}
\newcommand{\Ga}{\Gamma}

\newcommand{\del}{\delta}
\newcommand{\Del}{\Delta}

\newcommand{\sig}{\sigma}

\newcommand{\om}{\omega}
\newcommand{\Om}{\Omega}

\newcommand{\ol}{\overline}

\newcommand{\br}{\vspace{3 mm}}
\newcommand{\imp}{\Rightarrow}
\newcommand{\nor}{\vartriangleleft}

\newcommand{\rest}{\upharpoonright}

\newcommand{\cls}{{\rm{cls\,}}}

\newcommand{\Homeo}{\rm{Homeo\,}}

\newcommand{\Sub}{\mathbf{Sub}}

\newcommand{\bcs}{\backslash}

\newcommand{\roott}{{\rm{root}}}

%\renewcommand{\theequation}{\thesection.\arabic{equation}}

%\newcommand{\df}{\noindent{\bf Definition:\ }}
%\newcommand{\rmk}{\noindent{\bf Remark:\ }}
%\newcommand{\exa}{\noindent{\bf Example:\ }}
%\newcommand{\exe}{\noindent{\bf Exercises:\ }}

%%%%%%%%%%%%%%%%%%%%%%%%%%%%%%%%%%%%%%%%%%%%%%%%%%%%%%%%%%%%%%%%%%%%
%%%%%%%%%             THEOREM ENVIRONMENTS              %%%%%%%%%%%%
%%%%%%%%%%%%%%%%%%%%%%%%%%%%%%%%%%%%%%%%%%%%%%%%%%%%%%%%%%%%%%%%%%%%

\swapnumbers
\theoremstyle{plain}
\newtheorem{thm}{Theorem}[section]

\newtheorem{prop}[thm]{Proposition}
\newtheorem{Claim}[thm]{Claim}

\theoremstyle{definition}
\newtheorem{defn}[thm]{Definition}

\newtheorem{prob}[thm]{Problem}

%\numberwithin{thm}{subsection}

%%%%%%%%%%%%%%%%%%%%%%%%%%%%%%%%%%%%%%%%%%%%%%%%%%%%%%%%%%%%%%%%%%%%
%%%%%%%%%                    R E S E T S                %%%%%%%%%%%%
%%%%%%%%%%%%%%%%%%%%%%%%%%%%%%%%%%%%%%%%%%%%%%%%%%%%%%%%%%%%%%%%%%%%

%\setlength{\unitlength}{1em}
%\addtolength{\headheight}{1.7pt}

%%%%%%%%%%%%%%%%%%%%%%%%%%%%%%%%%%%%%%%%%%%%%%%%%%%%%%%%%%%%%%%%%%%%
%%%%%%%%%%%%%%%%%%%%%       BEGIN DOCUMENT     %%%%%%%%%%%%%%%%%%%%%
%%%%%%%%%%%%%%%%%%%%%%%%%%%%%%%%%%%%%%%%%%%%%%%%%%%%%%%%%%%%%%%%%%%%

\begin{document}

%%%%%%%%%%%%%%%%%%%%%%%%%%%%%%%%%%%%%%%%%%%%%%%%%%%%%%%%%%%%%%%%%%%%
%%%%%%%%%%%%%%%%%%% T H E    T I T L E    %%%%%%%%%%%%%%%%%%%%%%%%%%
%%%%%%%%%%%%%%%%%%%%%%%%%%%%%%%%%%%%%%%%%%%%%%%%%%%%%%%%%%%%%%%%%%%%
\title[On minimal actions of countable groups]
{On minimal actions of countable groups}

\author{Eli Glasner and Benjamin Weiss}

\address{Department of Mathematics\\
     Tel Aviv University\\
         Tel Aviv\\
         Israel}
\email{glasner@math.tau.ac.il}
\address {Institute of Mathematics\\
 Hebrew University of Jerusalem\\
Jerusalem\\
 Israel}
\email{weiss@math.huji.ac.il}

\begin{date}
{December 26, 2017}
\end{date}

\begin{abstract}
Our purpose here is to review some recent developments 
in the theory of dynamical systems
whose common theme is a
link between minimal dynamical systems, certain
Ramsey type combinatorial properties, and the Lov\'{a}sz local lemma (LLL).
For a general countable group $G$ the two classes of minimal systems we will
deal with are (I) the minimal subsystems of the {\em subgroup system} $ (\Sub(G), G)$, 
called URS's (uniformly recurrent subgroups),
and (II) minimal {\em subshifts}; i.e. subsystems of the binary Bernoulli $G$-shift  
$(\{0, 1\}^G, \{\sig_g\}_{g \in G})$.
\end{abstract}

%\thanks{Research supported by ISF grant \# 1333/04.}
\thanks{{\em 2010 Mathematical Subject Classification:}
%54H20 Topological dynamics
%37B10 Symbolic dynamics
%20Exx Structure and classification of infinite or finite groups
Primary 54H20, 05D10, 37A50. Secondary 20E99, 37B10}

\keywords{URS, Bernoulli $G$-shift, free actions,  Lov\'{a}sz local lemma}

\maketitle

%%%%%%%%%%%%%%%%%%%%%%%%%%%%%%%%%%%%%%%%%%%%%%%%%%%%%%%%%%%%%%%%%%%%%
%%%%%%%%%%%%%%%%%%%%%%%%  CONTENTS   %%%%%%%%%%%%%%%%%%%%%%%%%%%%%%%%
%%%%%%%%%%%%%%%%%%%%%%%%%%%%%%%%%%%%%%%%%%%%%%%%%%%%%%%%%%%%%%%%%%%%%

%\tableofcontents
%\setcounter{secnumdepth}{2}

%\addtocontents{toc}{subsection}{\protect\hspace{0.5cm}}
%\newpage
%28.11.03 version

%%%%%%%%%%%%%%%%%%%%%%%%%%%%%%%%%%%%%%%%%%%%%%%%%%%%%%%%%%%%%%%%%%%%%
%%%%%%%%%%%%%%%%%%%%%%%%%%%%  ABSTRACT %%%%%%%%%%%%%%%%%%%%%%%%%%%%%%
%%%%%%%%%%%%%%%%%%%%%%%%%%%%%%%%%%%%%%%%%%%%%%%%%%%%%%%%%%%%%%%%%%%%%

%\begin{abstract}
%
%\end{abstract}
%
%\maketitle

%%%%%%%%%%%%%%%%%%%%%%%%%%%%%%%%%%%%%%%%%%%%%%%%%%%%%%%%%%%%%%%%%%%%%
%%%%%%%%%%%%%%%%%%%%%%%%%%%%  TEXT   %%%%%%%%%%%%%%%%%%%%%%%%%%%%%%%%
%%%%%%%%%%%%%%%%%%%%%%%%%%%%%%%%%%%%%%%%%%%%%%%%%%%%%%%%%%%%%%%%%%%%%

%\setcounter{section}{0}
%\setcounter{page}{0}

%%%%%%%%%%%%%%%%%%%%%%%%%%%%%%%%%%%%%%%%%%%%%%%%%%%%%%%%%%%%%%%%%%%%%
%%%%%%%%%%%%%%%%%%%       Inroduction         %%%%%%%%%%%%%%%%%%%%%%%
%%%%%%%%%%%%%%%%%%%%%%%%%%%%%%%%%%%%%%%%%%%%%%%%%%%%%%%%%%%%%%%%%%%%%

\section*{Introduction}

The following quotes are from the
1964 paper of Walter H. Gottschalk , one of the founders of the modern theory of topological dynamics, entitled {\em A survey of minimal sets}, \cite{Gott-64}.
\begin{quote}
As for any kind of mathematical structure, there are two basic problems regarding minimal sets:
(1) The classification problem; (2) The construction problem.
\end{quote}

 ...

\begin{quote}
Apart from a relatively few particular constructions of certain minimal sets which do not seem to generalize readily, there
appears to be presently two rather large classes of minimal sets :
(I) coset transformation groups; (II) symbolic flows.
\end{quote}

Of course in the year 1964 the theory of minimal sets was in a rather early stage of its development
and nowadays we have a vast literature on that subject, to which no reasonable survey can do justice.
Our purpose here is much less ambitious.
We would like to review some recent developments whose common theme is a
link between the general theory of minimal dynamical systems, certain
Ramsey type combinatorial properties, and the Lov\'{a}sz local lemma (LLL).

Notwithstanding the huge increase in the stock of examples of minimal systems we have today,
Gottschalk's observation about the two basic classes of examples (I) and (II) above remains,
to some extent, valid. In view of the recent developments we were referring to,
we will replace the class of cosets transformation groups (or homogenous dynamical systems,
in a more modern terminology) with the larger class of {\em subgroup systems}; i.e.
dynamical systems which arise as subsystems of the system $(\mathcal{S}(G), G)$, where
for a locally compact group $G$, the space $\mathcal{S}(G) = \Sub(G)$ is the compact space of closed
subgroups of $G$ equipped with the Fell
%, or Chabauty,
topology, and the action of $G$ on $\Sub(G)$
is by conjugation. As to the class of subshifts, there is no doubt that even today
 this is one of the most prominent and well studied classes of dynamical systems.
Thus the two classes of minimal systems we will
deal with are (I) the minimal subsystems of $(\mathcal{S}(G), G)$, called URS's (uniformly recurrent subgroups),
and (II) minimal symbolic dynamical systems; in fact for our purposes minimal subshifts
of the binary Bernoulli $G$-shift $(\Om, G)$ will suffice, where here $G$ is a discrete countable group,
$\Om = \{0,1\}^G$ and $G$ acts by permutations: $g\om(h) = \om(g^{-1}h), \ \om \in \Om,\  g, h \in G$.

In \cite{GW}, where the notion of URS was introduced, we showed that every URS is
the stability system of a transitive dynamical system and asked whether this transitive system can be chosen to be
minimal (for the exact definitions of these notions see Section \ref{Sec-stab} below).
Not long after, this question was addressed in three independent works, \cite{K}, \cite{Elek} and \cite{MB-T}.
In \cite{MB-T} there is a complete answer to the question which in fact provides much more.
The two other works give partial answers and use completely different methods.
In the first part of our review we chose to expound Elek's proof of his theorem which asserts
that the answer is positive in the case of a finitely generated group.
Elek's main tool is the LLL.

In \cite{GU} Glasner and Uspenskij asked whether every infinite countable group $G$
admits a free subshift $X$.
By a {\em subshift} we mean a closed subset $X$ of $\Om = \{0, 1\}^G$ that
is invariant under the shift action. It is said to be {\em free} if for all $e \not = g \in G$,
and for all $x \in X$, $gx \not= x$.
In that paper the question was also formulated as
a coloring problem.
At about the same time Gao, Jackson and Seward in \cite{GJS}, provided a positive answer to this question
by means of an intricate bare hands construction.
Recently, again using the LLL as a main tool, Aubrun, Barbieri and Thomass\'{e} \cite{ABT},
gave an alternative, much shorter proof, a variation of which we will present
in the second part of our review.
%In the last section we introduce a combinatorial property of groups which we call the DJ property.
%This is a preview of a work in progress \cite{GW-DJ}.
%
We end this introduction by recalling the LLL.

\begin{thm} [Lov\'{a}sz's Local Lemma]\label{LLL}
Let $X$ be a finite set and $P$ be a probability distribution on $X$.
Let $\mathcal{A} = \bigcup_{i=1}^r \mathcal{A}_i$ be a  collection of subsets of $X$.
Suppose that for all $A \in  \mathcal{A}_i, \ P(A) = p_i$ and that
there are real numbers $0 \le a_1, a_2, \dots, a_r < 1$ and $\Del_{ij} \ge 0, \ i,j = 1,2, \dots, r$,
such that the following conditions hold:

\begin{enumerate}
\item
for any $A \in \mathcal{A}_i$ there exists a set
$\mathcal{D}_A \subset \mathcal{A}$  with
$|\mathcal{D}_A  \cap \mathcal{A}_j| \le \Del_{ij}$ for all $1 \le j \le r$
such that $A$ is independent of $\mathcal{A} \setminus (\mathcal{D}_A \cup {A})$,
\item
$p_i \le a_i  \prod_{j =1}^r (1 - a_j)^{\Del_{ij}} \
{\text{ for all}} \ 1 \le i \le  r$.
\end{enumerate}
Then $P(\bigcap _{A \in \mathcal{A}} A^c) > 0$.
\end{thm}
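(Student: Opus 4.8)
The plan is to recognize the statement as the asymmetric (weighted) form of the Lov\'asz Local Lemma with the weights held constant on each class, and to prove it by the standard induction. First I would fix, for each $A \in \mathcal{A}$, an index $i=i(A)$ with $A \in \mathcal{A}_i$ and set $x_A := a_i \in [0,1)$; note $\mathcal{A}$ is finite because $X$ is. Hypothesis (2) is stronger than the usual pointwise bound: since $0 \le 1-a_j \le 1$, raising it to a larger exponent only decreases it, so from $|\mathcal{D}_A \cap \mathcal{A}_j| \le \Del_{ij}$ we get
\[
P(A) = p_i \;\le\; a_i \prod_{j=1}^r (1-a_j)^{\Del_{ij}} \;\le\; a_i \prod_{j=1}^r (1-a_j)^{|\mathcal{D}_A \cap \mathcal{A}_j|} \;=\; a_i \prod_{B \in \mathcal{D}_A} (1-x_B).
\]

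The heart of the argument is the claim that for every finite $\mathcal{S} \subseteq \mathcal{A}$ one has $P\bigl(\bigcap_{B\in\mathcal{S}}B^c\bigr) > 0$, and for every $A \in \mathcal{A}\setminus\mathcal{S}$ (with $A\in\mathcal{A}_i$) that $P\bigl(A \mid \bigcap_{B\in\mathcal{S}}B^c\bigr) \le a_i$. I would prove both simultaneously by induction on $|\mathcal{S}|$. The case $\mathcal{S}=\emptyset$ is exactly $p_i \le a_i$. For the step, partition $\mathcal{S} = \mathcal{S}_1 \sqcup \mathcal{S}_2$ with $\mathcal{S}_1 = \mathcal{S}\cap\mathcal{D}_A$, $\mathcal{S}_2 = \mathcal{S}\setminus\mathcal{D}_A$, and write
\[
P\Bigl(A \;\Big|\; \bigcap_{B\in\mathcal{S}}B^c\Bigr) \;=\; \frac{P\bigl(A \cap \bigcap_{B\in\mathcal{S}_1}B^c \;\big|\; \bigcap_{C\in\mathcal{S}_2}C^c\bigr)}{P\bigl(\bigcap_{B\in\mathcal{S}_1}B^c \;\big|\; \bigcap_{C\in\mathcal{S}_2}C^c\bigr)},
\]
the denominator being positive by the inductive hypothesis. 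The numerator is at most $P\bigl(A \mid \bigcap_{C\in\mathcal{S}_2}C^c\bigr) = P(A) = p_i$, because $A$ is independent of the events indexed by $\mathcal{S}_2 \subseteq \mathcal{A}\setminus(\mathcal{D}_A\cup\{A\})$. For the denominator, enumerate $\mathcal{S}_1 = \{B_1,\dots,B_m\}$, expand it as the telescoping product $\prod_{k=1}^m\bigl(1 - P(B_k \mid \bigcap_{l<k}B_l^c \cap \bigcap_{C\in\mathcal{S}_2}C^c)\bigr)$, and apply the inductive hypothesis to each factor (every conditioning set appearing here has fewer than $|\mathcal{S}|$ elements): writing $B_k\in\mathcal{A}_{j(k)}$, that factor is at least $1-a_{j(k)}$. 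Since $\mathcal{S}_1\subseteq\mathcal{D}_A$ and $|\mathcal{D}_A\cap\mathcal{A}_j|\le\Del_{ij}$, each index $j$ occurs among the $j(k)$ at most $\Del_{ij}$ times, so (using again that the factors lie in $[0,1]$) the denominator is $\ge \prod_{j=1}^r(1-a_j)^{\Del_{ij}}$. Combining the two bounds with (2) gives $P\bigl(A\mid\bigcap_{B\in\mathcal{S}}B^c\bigr)\le p_i / \prod_j(1-a_j)^{\Del_{ij}} \le a_i$, and the positivity statement for $\mathcal{S}$ follows from $P\bigl(\bigcap_{B\in\mathcal{S}}B^c\bigr) = P\bigl(\bigcap_{B\in\mathcal{S}'}B^c\bigr)\bigl(1-P(B_0\mid\bigcap_{B\in\mathcal{S}'}B^c)\bigr) \ge P\bigl(\bigcap_{B\in\mathcal{S}'}B^c\bigr)(1-a_{i(B_0)}) > 0$ for $\mathcal{S} = \mathcal{S}'\sqcup\{B_0\}$, using $a_{i(B_0)} < 1$.

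Finally I would enumerate $\mathcal{A} = \{A_1,\dots,A_N\}$, apply the chain rule
\[
P\Bigl(\bigcap_{A\in\mathcal{A}}A^c\Bigr) \;=\; \prod_{n=1}^N \Bigl(1 - P\bigl(A_n \;\big|\; \textstyle\bigcap_{m<n}A_m^c\bigr)\Bigr) \;\ge\; \prod_{n=1}^N \bigl(1 - a_{i(A_n)}\bigr) \;>\; 0,
\]
where the conditioning is legitimate and the partial products are positive by the positivity half of the claim, and the final strict inequality uses $a_j < 1$ for all $j$. I expect the only delicate point to be the bookkeeping inside the inductive step: correctly tracking which conditioning sets have strictly shrunk, restricting the independence appeal in the numerator to the set $\mathcal{A}\setminus(\mathcal{D}_A\cup\{A\})$ exactly, and matching the per-class multiplicities of the $B_k$ against the bounds $\Del_{ij}$; there is no substantive obstacle beyond this care, the whole content being the passage from (2) to the uniform conditional bound $\le a_i$.
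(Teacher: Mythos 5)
Your proof is correct, but there is nothing in the paper to compare it against: Theorem \ref{LLL} is only \emph{recalled} there as background (the LLL is the imported tool used later for nonrepetitive colorings and for free subshifts), and the paper gives no proof of it. What you have written is the standard inductive proof of the asymmetric local lemma, specialized to the ``class'' formulation in which the probabilities and weights are constant on each $\mathcal{A}_i$: the simultaneous induction on $|\mathcal{S}|$ giving positivity of $P\bigl(\bigcap_{B\in\mathcal{S}}B^c\bigr)$ together with the uniform conditional bound $P\bigl(A\mid\bigcap_{B\in\mathcal{S}}B^c\bigr)\le a_i$, the split $\mathcal{S}=\mathcal{S}_1\sqcup\mathcal{S}_2$ along $\mathcal{D}_A$, the independence estimate for the numerator, the telescoped lower bound $\prod_{j=1}^r(1-a_j)^{\Delta_{ij}}$ for the denominator, and the final chain rule; this is sound. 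Two points of hygiene. First, your displayed ``equality'' $\prod_{j}(1-a_j)^{|\mathcal{D}_A\cap\mathcal{A}_j|}=\prod_{B\in\mathcal{D}_A}(1-x_B)$ is an equality only if the classes $\mathcal{A}_i$ are pairwise disjoint (or if multiplicities are counted via the chosen index $i(B)$); if an event lies in several classes the left-hand side is the smaller quantity, so the inequality you actually need still holds --- and in any case this pointwise form is never used later, since your induction works directly from hypothesis (2). Second, the hypothesis that $A$ ``is independent of $\mathcal{A}\setminus(\mathcal{D}_A\cup\{A\})$'' must be read, as is standard for the LLL, as mutual independence of $A$ from that whole family, so that $P\bigl(A\mid\bigcap_{C\in\mathcal{S}_2}C^c\bigr)=P(A)$ for every subfamily $\mathcal{S}_2$ of it; pairwise independence would not suffice, and your numerator step is exactly where this reading is invoked. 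With these readings your bookkeeping (conditioning sets strictly shrinking, the $\mathcal{S}_1=\emptyset$ case trivial, per-class multiplicities in $\mathcal{S}_1$ bounded by $\Delta_{ij}$, all factors in $[0,1]$, and $a_j<1$ for the final strict positivity) is complete.
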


%{\bf{Part I : $\Sub (G) = \mathcal{S}(G)$, the space of subgroups}}

\br

%\section*{Introduction}

\section{Stability systems and URS}\label{Sec-stab}

Let $G$ be a locally compact second countable topological group.
A {\em $G$-dynamical system} is a pair $(X,G)$ where $X$ is a compact metric
space and $G$ acts on $X$ via a continuous homomorphism
$G \to \Homeo(X)$ the Polish groups of self homeomorphisms of $X$
equipped with the topology of  uniform convergence.
Given a compact dynamical system $(X,G)$, for
$x \in X$ let
$$
St_x= G_x =\{g \in G : gx =x\}
$$
be the {\em stability group at $x$}.

Let $\mathcal{S}=\mathcal{S}(G)$ be the compact metrizable space of all subgroups of $G$
equipped with the Fell topology.
Recall that given a Hausdorff topological space $X$, a basis for the {\it Fell topology} on the hyperspace $2^X$, comprising the closed subsets of $X$, is given by the collection of sets $\{\Ucal(U_1,\dots,U_n; C)\}$, where
$$
\Ucal(U_1,\dots,U_n; C) =\{A \in 2^X :  \forall \ 1 \le j \le n,\  A \cap U_j
\not=\emptyset \ \& \ A \cap C = \emptyset\}.
$$
Here $\{U_1,\dots, U_n\}$ ranges over finite collections of open subsets of
$X$ and $C$ runs over the compact subsets of $X$.
The Fell topology is always compact and it is Hausdorff iff $X$ is locally compact (see e.g. \cite{Be}).
We let $G$ act on $\mathcal{S}(G)$ by conjugation,
 $(g,H) \mapsto g\cdot H = H^g = gHg^{-1}$
($g \in G, H \in \mathcal{S}(G)$).
% Thus for a subgroup $H < G$ we have $g\cdot H = H^g = gHg^{-1}$.
This action makes $(\mathcal{S}(G),G)$ a $G$-dynamical system.

Perhaps the first systematic study of the space $\mathcal{S}(G)$ is to be found in
Auslander and Moore's memoir \cite{AM}. It then played a central role in the seminal work of
Stuck and Zimmer \cite{SZ}.
More recently the notion of IRS (invariant random subgroup)
was introduced in the work of M. Abert, Y. Glasner and B. Virag \cite{AGV}.
Formally this object is just a $G$-invariant probability measure on $\mathcal{S}(G)$.
This latter work together with A. Vershik \cite{V},
served as a catalyst and lead to a renewed vigorous interest in the study of IRS's
(see, among others, \cite{A-S}, \cite{Aetal}, \cite{AGV2},  \cite{B}, \cite{B2}, and \cite{BGK1}, \cite{BGK2}).
A brief historical discussion of the subject can be found in \cite{AGV}.

Pursuing the well studied analogies between ergodic theory and topological dynamics
(see \cite{GW}) we  introduced in \cite{GW-15} a topological dynamical analogue of the notion of an IRS.

\begin{defn}
A  {\em uniformly recurrent subgroup} (URS for short) is a minimal subsystem of $\mathcal{S}(G)$.
\end{defn}

In turn, \cite{GW-15} attracted the attention of several authors and interesting links with
the theory theory of simple $C^*$-algebras were established;
see \cite{Ke-15}, \cite{LB-MB}, \cite{K}, \cite{MB-T}, \cite{Elek}.

\br

% and uniformly recurrent subgroups (URS)}
%\label{Sec1}

It is easy to check that, whenever $(X,G)$ is a dynamical system,
the map $\phi : X \to \mathcal{S}(G),\ x \mapsto G_x$ is upper-semi-continuous.
In fact, if $x_i \to x$ and $g_i \to g$ (when $G$ is discrete the latter just means that
eventually $g_i=g$) are convergent sequences in $X$ and
$G$ respectively, with $g_i \in G_{x_i}$, then $x_i=g_i x_i \to gx$, hence
$gx=x$ so that $\limsup G_{x_i} \subset G_x$.

\begin{defn}
Let $\pi : (X,G) \to (Y,G)$ be a homomorphism of $G$-systems; i.e.
$\pi$ is a continuous, surjective map and $\pi(gx) = g \pi(x)$ for every $x \in X$ and $g \in G$.
We say that $\pi$ is an {\em almost one-to-one extension} if there is a dense $G_\del$
subset $X_0 \subset X$ such that $\pi^{-1}(\pi(x)) = \{x\}$ for every $x \in X_0$.
\end{defn}

\begin{prop}\label{stability}
Let
% $G$ be a countable group and
$(X,G)$ be a compact system.
Denote by $\phi : X \to \mathcal{S}(G)$ the upper-semi-continuous map $x \mapsto G_x$
and let $X_0 \subset X$ denote the dense $G_\del$ subset of continuity points of $\phi$.
Construct the diagram
\begin{equation*}
\xymatrix
{
& \tilde{X} = X \vee Z \ar[dl]_\eta\ar[dr]^\al  & \\
X & & Z
}
\end{equation*}
where
\begin{gather*}
Z = {\cls} \{G_{x} : x \in X_0\}\subset \mathcal{S}(G),\\
\tilde{X} = {\cls}\{(x,G_{x}) : x \in X_0\} \subset X \times Z,
\end{gather*}
and $\eta$ and $\al$ are the restrictions to $\tilde{X}$ of the projection maps.
We have:
\begin{enumerate}
\item
The map $\eta$ is an almost one-to-one extension.
\end{enumerate}
If moreover $(X,G)$ is minimal then
\begin{enumerate}
\item[(2)] $Z$ and $\tilde{X}$ are minimal systems.
\item[(3)]
$Z$ is the unique minimal subset of the set ${\cls} \{G_x : x \in X\}\subset \mathcal{S}(G)$
and $\tilde{X}$ is the unique minimal subset of the set
${\cls} \{(x,G_x) : x \in X\}\subset X \times \mathcal{S}(G)$.
\end{enumerate}
\end{prop}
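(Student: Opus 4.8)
The plan is to reduce the whole proposition to two elementary observations and then push closures around. The first observation is that $\phi$ is $G$-equivariant, since $G_{gx}=gG_xg^{-1}$; as $G$ acts by homeomorphisms on both $X$ and $\mathcal{S}(G)$, the set $X_0$ of continuity points of $\phi$ is $G$-invariant. The second is that, by the meaning of continuity, if $x\in X_0$ and $x_n\to x$ in $X$ then $G_{x_n}=\phi(x_n)\to\phi(x)=G_x$ in $\mathcal{S}(G)$. (That $X_0$ is a dense $G_\del$ set is the standard Baire-category fact about continuity points of an upper-semi-continuous map on a compact metric space; since $X$ and $\mathcal{S}(G)$ are metrizable we may argue with sequences throughout.) I would also record that $\eta$ and $\al$ are genuine factor maps: they are restrictions of the equivariant coordinate projections, their images are compact hence closed, and these images contain the dense sets $X_0\subset X$ and $\{G_x:x\in X_0\}\subset Z$, so $\eta$ and $\al$ are onto. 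For (1): fix $x\in X_0$ and let $(x,H)\in\tilde X$; choosing $(x_n,G_{x_n})\to(x,H)$ with $x_n\in X_0$, the second observation forces $G_{x_n}\to G_x$, hence $H=G_x$. Thus $\eta^{-1}(x)=\{(x,G_x)\}$ for every point $x$ of the dense $G_\del$ set $X_0$, which is precisely the assertion that $\eta$ is an almost one-to-one extension.

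The engine for (2) and (3) is the following claim, valid once $(X,G)$ is minimal: for every $x\in X_0$ one has $\cls G(x,G_x)=\tilde X$. For $\subset$: $X_0$ is $G$-invariant, so $Gx\subset X_0$ and $G(x,G_x)=\{(gx,G_{gx}):g\in G\}\subset\{(y,G_y):y\in X_0\}$, and taking closures gives $\cls G(x,G_x)\subset\tilde X$. For $\supset$: minimality makes $Gx$ dense in $X$, so given $y\in X_0$ choose $g_n$ with $g_nx\to y$; then continuity of $\phi$ \emph{at} $y$ gives $G_{g_nx}\to G_y$, i.e. $(y,G_y)\in\cls G(x,G_x)$, and since such pairs are dense in $\tilde X$ we get $\tilde X\subset\cls G(x,G_x)$. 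Granting the claim, (2) follows: if $M\subset\tilde X$ is nonempty, closed and invariant, then $\eta(M)$ is nonempty, closed and invariant, hence $\eta(M)=X$ by minimality of $X$; so $M$ meets $\eta^{-1}(x)$ for some $x\in X_0$, and by (1) this forces $(x,G_x)\in M$, whence $\tilde X=\cls G(x,G_x)\subset M$. Thus $\tilde X$ is minimal, and $Z=\al(\tilde X)$ is minimal as a factor of $\tilde X$.

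For (3) I would pass through the larger set $W:=\cls\{(x,G_x):x\in X\}\subset X\times\mathcal{S}(G)$, which is closed and $G$-invariant, contains $\tilde X$, and whose image under the second-coordinate projection $p_2:X\times\mathcal{S}(G)\to\mathcal{S}(G)$ is $\cls\{G_x:x\in X\}$ (note $\al=p_2|_{\tilde X}$, so $p_2(\tilde X)=Z$). First, $\tilde X$ is the \emph{unique} minimal subset of $W$: if $M'\subset W$ is minimal, its projection to $X$ is closed, invariant and nonempty, hence all of $X$ by minimality, so $M'$ contains a pair $(x,H)$ with $x\in X_0$; since $(x,H)\in W$ there is a sequence $y_n\to x$ with $G_{y_n}\to H$, and continuity of $\phi$ at $x$ forces $H=G_x$, whence by the claim $\tilde X=\cls G(x,G_x)\subset M'=\tilde X$. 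This is exactly the second assertion of (3). For the first, let $N\subset\cls\{G_x:x\in X\}$ be minimal, pick $H\in N$, and choose $x$ with $(x,H)\in W$ (possible since $p_2(W)=\cls\{G_x:x\in X\}$). Then $\cls G(x,H)\subset W$ is nonempty, closed and invariant, so it contains a minimal set, which must be $\tilde X$; applying $p_2$ gives $Z=p_2(\tilde X)\subset p_2(\cls G(x,H))=\cls(G\cdot H)\subset N$, and since $Z$ and $N$ are both minimal, $Z=N$.

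The only step that is not routine bookkeeping is the claim in the second paragraph, that a single orbit closure $\cls G(x,G_x)$ taken over one continuity point already exhausts $\tilde X$; this rigidity is what forces the various candidate minimal sets to coincide, and it is the place where upper semicontinuity, through the choice of $X_0$, really does the work. A small point to watch throughout is that continuity of $\phi$ must be invoked at the \emph{limit} point, never at the approximating points, since $\phi$ is only semicontinuous in general.
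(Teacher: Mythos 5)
Your proof is correct, and its ingredients are the same ones the paper uses: continuity of $\phi$ at points of $X_0$ (invoked only at limit points), the singleton fibers of $\eta$ over $X_0$ from part (1), and the density of the orbit of a graph point $(x,G_x)$, $x\in X_0$, in $\tilde X$. Where you diverge is in the bookkeeping. For (2), the paper shows minimality of $\tilde X$ by proving that $(x_0,G_{x_0})$ lies in the orbit closure of an arbitrary $(x,L)\in\tilde X$: one takes $g_n x\to x_0$, passes to a subsequence so that $g_n(x,L)\to(x_0,K)$, and identifies $K=G_{x_0}$ via the singleton fiber; you instead push an arbitrary nonempty closed invariant $M\subset\tilde X$ through $\eta$, use minimality of $X$ to get $\eta(M)=X$, and then use the singleton fibers over $X_0$ to force $(x,G_x)\in M$ --- a standard almost one-to-one extension argument that avoids the subsequence step. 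For (3) your treatment is actually more complete than the paper's, which only says ``argue as in part (2)'' to place $(x_0,G_{x_0})$ in the orbit closure of each $(x,G_x)$, $x\in X$; taken literally this handles orbit closures of graph points, whereas a minimal subset of ${\cls}\,\{(x,G_x):x\in X\}$ could a priori consist of limit points $(y,L)$ not of that form. Your argument via $W={\cls}\,\{(x,G_x):x\in X\}$ --- projecting a minimal $M'\subset W$ onto $X$, identifying $H=G_x$ at a continuity point, and then transporting uniqueness to ${\cls}\,\{G_x:x\in X\}$ through $p_2$ --- closes exactly this gap, and the same fiber argument the paper uses for $\tilde X$ works verbatim for $W$, so the two routes are equivalent in substance; yours is simply self-contained at the one place the paper is telegraphic.
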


\begin{proof}
(1)\
Let $\tilde{X}_0 =\{(x,G_x) : x \in X_0\}$. It is easy to see that the fact that $x \in X_0$
implies that the fiber $\eta^{-1}(\eta((x,G_x)))$ is the singleton $\{(x,G_x)\}$.

(2)\
Fix a point $x_0 \in X_0$. The minimality of $(X,G)$ implies that the orbit of the point
$(x_0,G_{x_0})$ is dense in $\tilde{X}$. On the other hand,
if $(x,L)$ is an arbitrary point in $\tilde{X}$ then, again by minimality of $(X,G)$, there
is a sequence $g_n \in G$ with $\lim_{n}g_nx = x_0$. We can assume that
the limit $\lim_n g_n (x,L) = (x_0,K)$ exists as well, and then the fact that $x_0 \in X_0$
implies that $K = G_{x_0}$.
This shows that $(\tilde{X},G)$ is minimal and then so is $Z = \al(\tilde{X})$.

(3)\
Given any $x\in X$ we argue, as in part (2), that $(x_0,G_{x_0})$ is in the orbit closure
of $(x,G_x)$.
\end{proof}

\begin{defn}\label{stab-sys-def}
Given a dynamical system $(X,G)$ we call the system $Z \subset \mathcal{S}(G)$
{\em the stability system of $(X,G)$}. We denote it by $Z = \mathcal{S}_X$.
We say that $(X,G)$ is {\em essentially free} if $Z = \{e\}$.
When $X$ is minimal $Z$ is a URS.
%Note that when $Z = \{N\}$ is a singleton then the subgroup $N < G$ is necessarily a normal
%subgroup of $G$ and, by the upper-semi-continuity of the map $x \mapsto G_x$, it follows that
%$N < G_x$ for every $x \in X$, whence $N = \bigcap \{G_x : x \in X\}$. In this case then, the action reduces to an action of the group $G/N$ and the latter is essentially free.
%In particular, if the action of $G$ on $X$ is {\em effective}
%(i.e. $\forall g \not = e, \exists x \in X, gx \not=x$)
%then $Z =\{N\}$ implies that $N = \{e\}$, so that the action is essentially free.
%Also note that part (3) of the proposition implies that if $(X,G)$ is minimal and there is some point
%$x \in X$ with $G_x =\{e\}$ then $(X,G)$ is necessarily essentially free.
\end{defn}

A basic result proved in \cite{AGV} is that to every ergodic IRS $\nu$ (a $G$-invariant
probability measure on $\mathcal{S}(G)$) there corresponds an
ergodic probability measure preserving system $\Xb = (X,\mathcal{B},\mu,G)$ whose
stability system is $(\mathcal{S}(G),\nu)$.
In \cite{GW-15} we have shown that every URS is obtained as the stability system of a compact metric
topologically transitive system $(X, G)$ and posed the question
whether a stronger topological analogue
of the IRS result holds; namely:

\begin{prob}\label{prob}
Given a URS $Z \subset \mathcal{S}(G)$ is there always a metric minimal dynamical system
$(X, G)$ whose stability system is $Z$ ?
%Can one always take $(X, G)$ to be minimal ?
\end{prob}

\br

\section{Non-repetitive colorings}
%\section{Lov\'{a}sz's Local Lemma}
Let $\Ga = (V,E)$ be a graph.
Call a coloring $c : V \to K = \{1,2, \dots , C\}$ of the set of vertices of a graph {\em nonrepetitive}
if for any simple path
$(x_1,x_2,...,x_{2n})$
in $\Ga$ (i.e. $\forall  i \not = j, \ x_i \not = x_j$)
there exists some $1 \le i \le n$ such that $c(x_i)\not= c(x_{n+i})$.
% We call all the other colorings repetitive.

\br

The following theorem is essentially due to
Alon,  Grytczuk, Haluszczak and Riordan, \cite{AGHR}.

\begin{thm}\label{rep}
For any $d \ge 1$ there exists a constant $C(d) > 0$ such that any graph $\Ga$ (finite or infinite)
with vertex degree bounded by $d$ has a nonrepetitive coloring with an alphabet $K$, provided that $|K| \ge C(d)$.
\end{thm}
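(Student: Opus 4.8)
The plan is to apply the Lov\'asz local lemma (Theorem~\ref{LLL}) to a uniformly random coloring of $\Ga$, and then to reduce the infinite case to the finite one by a compactness argument. So let $\Ga = (V,E)$ have maximum degree $\le d$, fix $C = C(d)$ (to be chosen) and put $|K| = C$, and color the vertices of $\Ga$ independently, each uniformly in $K$. By an \emph{even path} I mean a simple path $P = (x_1,\dots,x_{2n})$ with an even number of vertices; for such a $P$ let $A_P$ be the bad event $\{c(x_i)=c(x_{n+i})\ \text{for all}\ 1\le i\le n\}$, so that a coloring $c$ is nonrepetitive precisely when $c\in\bigcap_P A_P^c$. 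Since the $2n$ vertices of $P$ are distinct and the colors are i.i.d.\ uniform, $P(A_P)=C^{-n}$; accordingly I set $p_n = C^{-n}$ and let $\mathcal{A}_n$ be the set of all $A_P$ with $P$ an even path on $2n$ vertices, and $\mathcal{A}=\bigcup_n\mathcal{A}_n$.

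Each $A_P$ is measurable with respect to the colors of $V(P)$ only, hence $A_P$ is independent of $\mathcal{A}\setminus(\mathcal{D}_{A_P}\cup\{A_P\})$ once we take $\mathcal{D}_{A_P}=\{A_Q\in\mathcal{A}: V(Q)\cap V(P)\ne\emptyset\}$. To bound $|\mathcal{D}_{A_P}\cap\mathcal{A}_m|$ I would count the paths on $2m$ vertices meeting $P$: such a path must pass through one of the $2n$ vertices of $P$, and the number of paths on $2m$ vertices through a prescribed vertex is at most $2m\,d^{2m-1}$ (choose the position of that vertex in the sequence, then extend to both sides using at most $d$ choices at each of the $2m-1$ remaining steps). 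Hence $|\mathcal{D}_{A_P}\cap\mathcal{A}_m|\le \Del_{nm}:=4nm\,d^{2m}$.

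Now I would verify condition (2) with $a_n=\lambda^n$, where $\lambda=(2d^2)^{-1}$, so that $a_m\le\tfrac12$ and $\ln\tfrac{1}{1-a_m}\le 2a_m$ for all $m$. Then
\[
\sum_{m\ge1}\Del_{nm}\ln\tfrac{1}{1-a_m}\;\le\; 8n\sum_{m\ge1}m\,(d^2\lambda)^m \;=\; 8n\sum_{m\ge1}m\,2^{-m}\;=\;16n,
\]
so $a_n\prod_{m}(1-a_m)^{\Del_{nm}}\ge \lambda^n e^{-16n}=(2d^2 e^{16})^{-n}$, and condition (2), $p_n\le a_n\prod_m(1-a_m)^{\Del_{nm}}$, holds as soon as $C\ge 2d^2 e^{16}$; thus $C(d)=\lceil 2d^2 e^{16}\rceil$ works. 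For a \emph{finite} graph $\Ga$ there are only the types $n=1,\dots,\lfloor|V|/2\rfloor$ and finitely many events, so Theorem~\ref{LLL} gives $P(\bigcap_P A_P^c)>0$, which exhibits a nonrepetitive coloring. For infinite $\Ga$ I would pass to the limit: $K^V$ is compact in the product topology, each $N_P:=A_P^c$ is clopen, and any finitely many even paths $P_1,\dots,P_k$ lie in a finite subgraph of $\Ga$ (still of maximum degree $\le d$), whose nonrepetitive coloring — extended arbitrarily to all of $V$ — lies in $N_{P_1}\cap\dots\cap N_{P_k}$; by the finite intersection property $\bigcap_P N_P\ne\emptyset$, giving a nonrepetitive coloring of $\Ga$.

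The step I expect to be the real point is the dependency estimate combined with the geometric choice $a_n=\lambda^n$: condition (2) is an a~priori infinite product over the types, so one needs $\sum_m\Del_{nm}a_m<\infty$, which forces $\lambda$ strictly below $d^{-2}$ and is exactly what pins down the order $d^2$ of $C(d)$. The probability computation, the independence structure, and the compactness reduction to finite graphs are all routine by comparison.
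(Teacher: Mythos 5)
Your proposal is correct and follows essentially the same route as the paper: the same LLL setup with bad events indexed by even simple paths grouped by half-length $n$, $p_n=C^{-n}$, geometric weights $a_n$, the dependency count $\Del_{nm}\le 4nm\,d^{2m}$, the bound $1-a_m\ge e^{-2a_m}$ yielding a convergent series and a bound on $C$ of order $d^2$ independent of the graph, and a compactness reduction from infinite to finite graphs. The only differences are cosmetic (your $a_n=(2d^2)^{-n}$ versus the paper's $(4d^2)^{-n}$, an explicit value of $C(d)$, and phrasing the compactness step via the finite intersection property rather than a convergent subsequence of colorings).
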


\begin{proof}
%eli
The proof is an application of the LLL, Theorem \ref{LLL}, as follows.
%The proof is an application of the following:
%
%\begin{thm} [Lov\'{a}sz's Local Lemma]\label{LLL}
%Let $X$ be a finite set and $P$ be a probability distribution on $X$.
%Let $\mathcal{A} = \bigcup_{i=1}^r \mathcal{A}_i$ be a set of events.
%Suppose that for all $A \in  \mathcal{A}_i, \ P(A) = p_i$ and that
%there are real numbers $0 \le a_1, a_2, \dots, a_r < 1$ and $\Del_{ij} \ge 0, \ i,j = 1,2, \dots, r$,
%such that the following conditions hold:
%
%\begin{enumerate}
%\item
%for any event $A \in \mathcal{A}_i$ there exists a set
%$\mathcal{D}_A \subset \mathcal{A}$  with
%$|\mathcal{D}_A  \cap \mathcal{A}_j| \le \Del_{ij}$ for all $1 \le j \le r$
%such that $A$ is independent of $\mathcal{A} \setminus (\mathcal{D}_A \cup {A})$,
%\item
%$p_i \le a_i  \prod_{j =1}^r (1 - a_j)^{\Del_{ij}} \
%{\text{ for all}} \ 1 \le i \le  r$.
%\end{enumerate}
%Then $P(\bigcap _{A \in \mathcal{A}} A^c) > 0$.
%\end{thm}
%
%\br
Let $\Ga$ be a finite graph with maximum degree $d$.
It is enough to prove our theorem for finite graphs.
Indeed, if $\Ga'$ is a connected infinite graph with vertex degree bound $d$,
then for each ball around a given vertex $v$ we have a nonrepetitive coloring.
Picking a pointwise convergent subsequence of the colorings we obtain
a nonrepetitive coloring of our infinite graph $\Ga'$.

Let $C$ be a large enough number, its exact value will be given later.
Let $X$ be the set of all random $\{1, 2, . . . , C\}$-colorings of $\Ga$.
Let $r = |V|$ and for
$1\le i \le r$ and for any path $L$ of length $2i -1$ let $A(L)$
be the event that $L$ is repetitive.
Set
$$
\mathcal{A}_i = \{A(L): L \ {\text{is a path of length $2i - 1$ in $\Ga$}}\},
$$
and let  $p_i = C^{- i}$.
The number of paths of length $2j - 1$ that intersects a given
path of length $2i - 1$ is less or equal than $4ijd^{2j}$ .
We set
$$
\Del_{ij} = 4ijd^{2j},\quad  \quad  a = \frac{1}{(2d)^{2}},    \quad  \quad      a_i =  \frac{1}{(2d)^{2i}}.
$$
Since $a_i \le 1/2$, we have $(1 - a_i) \ge \exp(- 2a_i)$.

In order to be able to apply the Local Lemma, we need that for any $1 \le i \le r$,
$$
p_i \le a_i \prod_{j=1}^r \exp(-2a_j \Del_{ij}).
$$
Now
$$
2a_j \Del_{ij} = 2\frac{1}{(2d)^{2j}} 4ij d^{2j} = 8 ij 2^{-2j}
$$
and the required inequality is
$$
C^{-i}  \le a_{i} \prod_{j=1}^r \exp(-8ij 2^{-2j}) =
[a \exp(-8\sum_{j=1}^r j2^{-{2j}})]^i,
$$
or equivalently
$$
C \ge a^{-1} \exp \left(8 \sum_{j=1}^r \frac{j}{2^{2j}} \right).
$$
Since the infinite series  $\sum_{j=1}^\infty \frac{j}{2^{2j}}$ converges,
we obtain that for large enough
$C$, the conditions of the Local Lemma are satisfied independently of the size of
our finite graph $\Ga$.
This ends the proof of Theorem
%eli
\ref{rep}.
\end{proof}

\br

\section{Realizations of URS's as stability systems}\label{Sec6}

%A basic result proved in \cite{AGV} is that to every ergodic IRS $\nu$ (a $G$-invariant
%probability measure on $\mathcal{S}(G)$) there corresponds an
%ergodic probability measure preserving system $\Xb = (X,\mathcal{B},\mu,G)$ whose
%stability system is $(\mathcal{S}(G),\nu)$.

%See the recent works of Creutz and Peterson
%\cite[Theorem 3.3]{Cr-P} and Creutz \cite[Theorem 3.3]{Cr} for continuous and
%NSRS versions of this theorem.
%We expected to be able to obtain
%an analogous statement for URS's. However, the question whether this desired analog holds remains open
%(see Problem \ref{min-stab} below)
%and we only have the following:

The following theorem of Elek provides a partial answer to our question \ref{prob}.
We will reproduce his proof with some slight modifications.

\begin{thm}\label{real}
Let $G$ be a finitely generated infinite group.
For every URS $Z \subset \mathcal{S}(G)$ there is a minimal system $(X,G)$ with
$Z$ as its stability system.
\end{thm}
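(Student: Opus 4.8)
The plan is to realize the URS $Z \subset \mathcal{S}(G)$ inside a carefully constructed subshift over a finite alphabet, using the non-repetitive coloring of Theorem \ref{rep} applied to the Cayley graph of $G$ to produce an auxiliary ``rigidifying'' coordinate that kills extra symmetries. Fix a finite symmetric generating set $S$ of $G$ and let $\Ga$ be the corresponding Cayley graph, which has bounded vertex degree $|S|$; by Theorem \ref{rep} it carries a non-repetitive coloring $c_0 : G \to K$ with $|K| = C(|S|)$. First I would encode $Z$ itself as a subshift: each closed subgroup $H \le G$ corresponds to its indicator $\ch_H \in \{0,1\}^G$, and $\phi : H \mapsto \ch_H$ is a $G$-equivariant homeomorphism of $\mathcal{S}(G)$ onto its image, where $G$ acts on $\{0,1\}^G$ by the shift (conjugation on $H$ corresponds to shifting the indicator); thus $Z$ becomes a minimal subshift $Z' \subset \{0,1\}^G$. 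The issue is that for a point $\ch_H \in Z'$ the stabilizer in $G$ of $\ch_H$ under the shift is precisely the normalizer $N_G(H)$, not $H$, and moreover $Z'$ need not be minimal-with-the-right-stabilizers; so $Z'$ alone does not solve the problem.

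The key construction is therefore to take a product. I would form $X \subset \{0,1\}^G \times K^G$ (or an appropriate subsystem thereof) built as follows: put on the second coordinate the orbit closure of the coloring $c_0$ — more precisely I would want a subshift $W \subset K^G$ that is minimal and all of whose points are genuinely free for the $G$-shift; such a $W$ exists because non-repetitiveness is a shift-invariant ``pattern-avoidance'' property of a single coloring, and one can pass to a minimal subsystem of $\overline{\{gc_0 : g \in G\}}$ while checking that the non-repetitive condition survives limits and forces triviality of stabilizers (a point $w$ with $gw = w$ would give an arbitrarily long repetitive simple path along the axis generated by $g$, contradicting non-repetitiveness preserved in the limit). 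Then I would set
\[
X = \overline{\{\, ( \ch_H,\, h\cdot c_0 ) : H \in \mathcal{S}_Z,\ \text{with $H$ running over $Z$-subgroups and the $K$-coordinate slaved appropriately} \,\}}
\]
— the precise coupling being the delicate point: I want $X$ to project onto $Z'$ (so its stability system maps onto $Z$) while the $K^G$-coordinate forces the stabilizer of each point of $X$ to be contained in the stabilizer of a point of $W$, i.e. trivial in the ``extra directions.'' Concretely, over a point $\ch_H$ one attaches all translates of the coloring by coset representatives, so that the fiber over $\ch_H$ records $H$ as a set of coincidences of colorings, making $G_{(\ch_H, \text{fiber pt})} = H$ exactly.

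After that the remaining steps are comparatively routine: (i) check $X$ is $G$-invariant and closed, hence a subshift, and pass to a minimal subsystem $X_{\min} \subseteq X$ — here one must verify that minimality is not lost, which follows from minimality of $Z$ together with the fact that the auxiliary coordinate was chosen inside a minimal free subshift and the coupling is $G$-equivariant; (ii) compute the stability map $\phi_X : x \mapsto G_x$ and show, using the non-repetitive/free coordinate to suppress spurious symmetries and using Proposition \ref{stability}, that the stability system $\mathcal{S}_{X_{\min}}$ equals $Z$ and not merely something mapping onto it; (iii) invoke Proposition \ref{stability} to replace $X_{\min}$ by its canonical almost one-to-one modification if needed so that the stability system is realized honestly as a URS. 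The main obstacle I anticipate is exactly step (ii): arranging the coupling between the $Z'$-coordinate and the free coloring coordinate so that passing to the orbit closure does \emph{not} enlarge the stabilizers — i.e. ruling out that a limit point acquires a stabilizer strictly larger than the intended $H$. This is where finite generation is essential (the Cayley graph has bounded degree, so Theorem \ref{rep} applies) and where Elek's use of the LLL does the real work; the combinatorial heart is showing that a point whose stabilizer contains some $g$ not in the intended subgroup would violate non-repetitiveness along a bi-infinite geodesic, contradicting the pattern constraints that are closed under taking limits.
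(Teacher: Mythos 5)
There is a genuine gap, and it sits exactly at the point you yourself flag as ``the delicate coupling.'' You apply Theorem \ref{rep} to the \emph{Cayley} graph of $G$ and put in the second coordinate a point of a minimal \emph{free} subshift $W \subset K^G$. But then for a point of the form $(\ch_H, h\cdot c_0)$ the stabilizer is $N_G(H) \cap hG_{c_0}h^{-1}$, which is trivial when the coloring coordinate is free; so your construction (at its defining points) realizes the trivial URS $\{e\}$, not $Z$. The problem is not merely, as you suggest, that orbit-closure limits might \emph{enlarge} stabilizers: as written the stabilizers are already wrong, being too small. The idea your outline is missing --- and the heart of Elek's proof --- is that the auxiliary coloring must be \emph{$H$-invariant}: one colors the Schreier coset graph $H_0\backslash G$ (with respect to the fixed symmetric generating set $S$, so its degree is bounded by $d=|S|$ and Theorem \ref{rep} applies to it, not to the Cayley graph) by a non-repetitive coloring $\om_0$, for a single $H_0 \in Z$, and takes the orbit closure $Y$ of $(H_0,\om_0)$ inside
$X = \{(H,\om) \in Z \times K^G : h\om = \om \ \ \forall h \in H\}$.
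The $H$-invariance built into the fibers guarantees $H \subseteq G_{(H,\om)}$, while non-repetitiveness does the opposite job only on $N_G(H)\setminus H$: if $g\in N_G(H)\setminus H$ fixed $(H,\om)$ it would act as a fixed-point-free, color-preserving automorphism $\theta$ of the colored Schreier graph, and a shortest path from a vertex $a$ minimizing $\mathrm{dist}(a,\theta(a))$ to $\theta(a)$, concatenated with its $\theta$-image, is a repetitive \emph{simple} path (Proposition \ref{pi=st}); since a repetitive simple path is a finite pattern, its absence passes to all points of the orbit closure, so $G_x = \pi(x)$ exactly on $Y$, and any minimal $Y_0\subset Y$ projects onto $Z$ by minimality of $Z$. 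Your sentence about ``attaching all translates of the coloring by coset representatives'' is not a construction that achieves this, and I do not see how to repair it without in effect replacing the free Cayley-graph coloring by an $H$-invariant Schreier-graph coloring, i.e.\ by the paper's construction.

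Two secondary points. First, your encoding of $Z$ as a subshift via indicators is not equivariant for the shift: $g\ch_H = \ch_{gH}$ is the indicator of a left coset, so the image of $\mathcal{S}(G)$ is not shift-invariant and the shift-stabilizer of $\ch_H$ is $H$, not $N_G(H)$; conjugation corresponds to the action $\om \mapsto \om(g^{-1}\,\cdot\, g)$. (The paper avoids this entirely by working directly with $Z \times K^G$.) Second, the paper treats separately the case where some $H\in Z$ has finite index (then $Z$ is finite and a direct construction with $Z \times G/H$ works), running the coloring argument only when every $H\in Z$ has infinite index; your outline does not address this case.
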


\begin{proof}
We begin with the case where some $H \in Z$ has finite index in $G$.
It then follows that $Z$ is finite and we let $Y = G/H = \{gH : g\in G\}$, the finite homogeneous
$G$-space of right $H$-cosets.
We now take $X$ to be the $G$-orbit of the point $x_0=(H,H)$ in the product system
$(Z \times Y, G)$. Note that $G$ acts on $Z$ by conjugation, whereas it acts on $Y$
by multiplication on the left. The stability subgroup $G_{x_0}$ is $H$ and it follows that
indeed $Z$ is the stability system of the (minimal) finite system $(X,G)$.

We now assume that $[G:H] = \infty$ for every $H \in Z$.

Let $K =\{1,2, \dots, C\}$ be a finite alphabet and set $\Om = K^G$.
Let $G$ act on $\Om$ by $g\om(h) = \om(g^{-1}h)$.
For $H < G$ let
$$
\Om_H = \{\om \in \Om : h\om = \om,\ \forall h \in H\},
$$
and
$$
X = \bigcup \{\{H\} \times \Om_H : H \in Z\} = \{(H,\om) \in Z \times \Om : h\om = \om, \ \forall h \in H\}.
$$
It is easy to check that $X$ is a closed $G$-invariant subset of
$Z \times \Om$.
Also note that the projection map $\pi : (X,G) \to (Z,G)$ is clearly a homomorphism of $G$-systems.

\br

1. \
Our first observation is that for each $x =(H,\om) \in X$ we have $\pi(x)=H$ and
$$
H \subset G_x = \{g \in G : gx=x\} \subset G_{\pi(x)}= G_H = \{g\in G : gHg^{-1} =H\} = N_G(H),
$$
the normalizer of $H$ in $G$.
Also note that
$H \nor N_G(H)$, so that $\tilde{H} =N_G(H)/H$ is a group.
Finally observe that for every $x \in X$ the group $\tilde{H}$, with $H =\pi(x)$, acts on the fiber
$\pi^{-1}(H) = \{H\} \times \Om_H \cong \Om_H$.

%\br
%
%2. \
%Given $H \in Z$ consider the map $\rho_H : \Om_H \to K^{\tilde{H}}$ which sends an element $\om
%\in \Om_H \subset \Om = K^G$ to its restriction to the subset $N_G(H) \subset G$.
%%(We identify  the image $\rho_H(\Om_H)$ with $K^{\tilde{H}}$.)
%Clearly $\rho_H$ is a surjective homomorphism of $\tilde{H}$-systems.
%
\br

2.\
Let $S = \{s_1, \dots, s_d\}$ be a fixed symmetric set of generators for $G$.
For any $H$ in $Z$ we will consider each element $x = (H,\om) \in \{H\} \times \Om_H \subset X$
as a coloring (with $C$ colors) of the corresponding Schreier graph $(V, E)$ whose set of vertices $V$ is the
collection of left $H$-cosets, $V = H\bcs G$, and a pair $(Ha, Hb)$ is an edge (an element of $E$)
iff $Hb = Has$ for some $s \in S$. We say that $x = (H, \om)$ is the {\em rooted colored Schreier graph
with root $H$} and write $H = \roott(x)$.

Note that for $g \in G$ and $x = (H,\om) \in X$
$$
g(H,\om) = (gHg^{-1}, g\om) = (gHg^{-1}, \om(g^{-1} \cdot))
$$
and thus $g\om$ is defined over the Schreier graph on $gHg^{-1} \bcs G = H^g\bcs G$,
where $H^ga$ and $H^gb$ are connected by an edge iff
$Ha$ and $Hb$ are connected by an edge in the Schreier graph  on $H \bcs G$.

Also note that if $g \in N_G(H)$ and $x=(H,\om) \in X$ then
$$
gx = (gH g^{-1}, g\om) = (H, g\om)
% = (H, \tilde{g}\om),
\quad {\text{with}} \quad
(g\om)(Ha) = \om(g^{-1}Ha) = \om(Hg^{-1}a).
$$
Finally note that for $g \in N_G(H)$, unless $g \in H$, we have for all $a \in G$,
$Hag \not = Ha$
so that for such $g$ the induced permutation, $\tilde{g}$ on $H\bcs G$,
%is not the identity permutation.
has no fixed points ($Hga = Ha \imp Hg = H \imp g \in H$).
Moreover, for $g \in N_G(H) \setminus H$, if $g(H, \om) = (H, \om)$ then
the permutation $\theta = \theta_g$,
$$
 \theta_g(Ha) = g^{-1}Ha = Hg^{-1}a
$$
defined on $H \bcs G$, is an automorphism of the rooted colored graph $(H,\om)$ which is not the identity automorphism.

\br

3.\
Let $|K| = C(d)$ and fix some $H_0 \in Z$.  Let $\om_0 : H_0 \bcs G \to K$
be a non-repetitive $K$-coloring that gives
rise to an element $y_0 =(H_0,\om_0) \in  \{H_0\} \times \Om_{H_0}$.
Let $Y = \ol {\{gy_0 : g \in G\}}$ denote the orbit closure of $y_0$ in $X$.
The following proposition finishes the proof of
Theorem 1.

\br

\begin{prop}\label{pi=st}
%All elements of the orbit closure $Y$ of $y$ in $\Theta$ are $Z$-proper.
For every $x = (H, \om) \in Y$ we have $St_x = H$; in other words
the the map $\pi(x) = St_x$.
Thus if $Y_0 \subset Y$ is any minimal subset of $Y$ then $St : Y_0 \to Z$
is also a surjective map, so that $Z$ is the stability system of the minimal system $Y_0$.
\end{prop}

\begin{proof}
Let $x = (H, \om) \in Y$ with underlying Schreier graph $H$ and coloring $\om : H  \bcs G \to K$.
%Since $Z$ is an URS, $H' \in Z$.
%Indeed, $\pi^{-1}(Z)$ is a closed $G$-invariant set and $y \in \pi^{-1}(Z)$.
%Clearly, $gx = x$ if $g \in H'$.
Suppose that for some $g \in G$ we have $gx = x$; i.e. $g \in St_x$.
Then, in particular $H^g = H$, i.e. $g \in N_G(H)$
and if $g \not\in H$ then, as in step 2, $g$ defines an automorphism $\theta = \theta_g$
of the rooted colored Schreier graph $x$
%(that is $x$ is not $Z$-proper).
%By Lemma 2.1, there exists a colored-labeled automorphism $\theta$ of the graph $x$ moving
such that
% $root(x) = H'$ to $H'g = g(root(x)) \not = root(x) = H'$.
$\theta(H)  = Hg^{-1} \not= H$.

For convenience, for an element $a \in G$,
% We will apply the following abuse of notation.  For any element $a \in G$
we will write $a \in V(x) = H \bcs G$, identifying $a$ with $Ha$.
We let $dist_x (a,b)$ denote the length of the shortest simple path from $a$ to $b$
in the graph $x$.
Let $a \in V(x)$ be a vertex such that
% there is no $b \in V(x)$ with $dist_x (b,\theta(b)) < dist_x (a,\theta(a))$.
$dist_x (a,\theta(a))$ is minimal; this is $>0$ since $\theta$ has no fixed points.
%If $dist_x (a,\theta(a))=0$; i.e. if $\theta(Ha) = Hg^{-1}a= Ha$, then $Hg^{-1}= H$, hence $g \in H$,
%contradicting our assumption.
Let $ (a_1,a_2,...,a_{n+1})$, with $a_1 = a$ and $a_{n+1} = \theta(a) = g^{-1}a$,
be a shortest path between $a$ and $\theta(a)$.
%For $1 \le  i  \le  n$, let $a_i s_{k_i}  = a_{i+1}$.
%Then let $a_{n+2} = a_{n+1}s_{k_1}, a_{n+3} = a_{n+2}s_{k_2} , \dots, a_{2n} = a_{2n - 1}s_{k_{n-1}}$.
Let
$$
a_2 = a_1 s_{k_1},  \quad  a_3 = a_2 s_{k_2} ,\quad \dots \quad a_n= a_{n - 1}s_{k_{n-1}}
$$
Then let
$$
a_{n+2} = a_{n+1}s_{k_1}, \quad  a_{n+3} = a_{n+2}s_{k_2} ,\quad   \dots \quad a_{2n} = a_{2n - 1}s_{k_{n-1}}.
$$
Thus, for $1 \le i \le n$ we have
$a_{n+i} = g^{-1}as_{k_1}\cdots s_{k_i} = \theta(a_i)$,
so that
$$
(a_{n+1}, a_{n+2},  \dots,  a_{2n} )= (\theta (a_1), \theta(a_2),...,\theta(a_{n})),
$$
Since $g\om = \om$,
%$\theta$ is a colored-labeled graph automorphism, for any  $1 \le i \le n$,
\begin{equation}\label{c=}
% {\text{for any}} \ 1 \le i \le n, \quad
\om(a_i) = \om(a_{n + i}).
\end{equation}

%Let $(a = a_1,a_2,...,a_{n+1} = \theta(a))$ be a shortest path between $a$ and $\theta(a)$.
%For $1 \le  i  \le  n$, let $a_i s_{k_i}  = a_{i+1}$.
%Then let $a_{n+2} = s_{k_1}(a_{n+1}), a_{n+3} = g_{k_1}(an+2), \dots, a_{2n} = g_{k_n}(a_{2n - 1})$.
%Since $\theta$ is a colored-labeled automorphism, for any $1 \le i \le n, \  c(a_i) = c(a_{i+n})$.
%\end{proof}

\br

\begin{Claim}
The walk $(a_1,a_2,...,a_{2n})$ is a simple path.
\end{Claim}

\begin{proof}
Suppose that this walk crosses itself. Since $(a_1,a_2,...,a_{n+1})$ is the shortest pass
between $a$ and $\theta(a)$, for some $i, j, \ a_j =a_{n+i}$.
Now if $i = j$ this would imply that $a = \theta(a)$. On the other hand if either $i < j$ or $j < i$
then this would provide a shorter path between $a$ and $\theta(a)$.
Thus, in any case we arrive at a contradiction.
%
%
%If $(n+1) - j \ge  (n+i) - (n+1) = i -1$, then $dist (a_2,\theta(a_2)) = dist (a_2,a_{n+2}) < dist (a,\theta(a))$.
%On the other hand, if $(n+1) - j \le (n+i) - (n+1) = i - 1$,
%then $dist (a_n, \theta(a_n)) = dist (a_n, a_{2n - 1}) < dist (a, \theta(a))$.
Therefore, $(a_1, a_2, . . . , a_{2n})$ is a path.
\end{proof}

By (\ref{c=}) and the previous Claim, the $K$-colored Schreier graph $x$ contains a repetitive path.
Since $x$ is in the orbit closure of $y$, this implies that $y$ contains a repetitive path as well,
in contradiction with our assumption. Thus we have shown that $gx =x$ implies $g \in H$.
This completes the proof of the proposition.
\end{proof}

The proof of Theorem \ref{real} is now complete.
\end{proof}

\br

\section{The work of Matte Bon and Tsankov}

In \cite{MB-T} Matte Bon and Tsankov provide a complete answer to Problem \ref{prob}.
In fact they show much more:

\begin{thm}
Let $G$ be a locally compact group and let
$H \subset \mathcal{S}(G)$ be a closed, invariant subset.
Then there exists a continuous action of $G$ on a compact space $X$ such
that the stabilizer map $St : X \to \mathcal{S}(G)$ is everywhere continuous and its image is
equal to $H$. If $G$ is second countable, $X$ can be chosen to be metrizable.
\end{thm}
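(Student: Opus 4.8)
The plan is to realize $X$ as a closed $G$-invariant subset of an auxiliary compact $G$-space in which the stabilizer is \emph{built into} one coordinate, so that its continuity becomes automatic and only surjectivity onto $H$ must be engineered. Since the stabilizer map $x\mapsto St_x$ is always upper semicontinuous (as recalled before Proposition~\ref{stability}), it suffices to build a compact $G$-space on which it is also lower semicontinuous and has image exactly $H$. Fix a compact metric space $\mathcal C$ of ``colours'' and set
\[
\mathcal Y_H=\bigl\{(L,\omega):\ L\in H,\ \omega\in\mathcal C^{G},\ \omega(\ell g)=\omega(g)\ \text{for all}\ \ell\in L,\ g\in G\bigr\},
\]
with $G$ acting by $g\cdot(L,\omega)=(gLg^{-1},\,g\omega)$, where $(g\omega)(h)=\omega(g^{-1}h)$. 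I would first check that $\mathcal Y_H$ is a closed $G$-invariant subset of $\mathcal S(G)\times\mathcal C^{G}$ — the one delicate point being that left $L$-invariance must pass to Fell-limits $L_i\to L$, which is automatic when $G$ is discrete but for non-discrete $G$ forces one to use continuous colourings, or colourings of a fixed coset transversal, instead — hence a compact $G$-space; and that every $(L,\omega)\in\mathcal Y_H$ satisfies the sandwich
\[
L\ \subseteq\ St_{(L,\omega)}\ \subseteq\ N_G(L),
\]
the left inclusion because $\omega$ is left $L$-invariant, the right because an element of $St_{(L,\omega)}$ must fix the first coordinate. So everything reduces to choosing, for each $L\in H$, a colouring $\omega_L$ that is \emph{rigid} enough to (i) kill the normaliser, forcing $St_{(L,\omega_L)}=L$, and (ii) retain this property throughout the $G$-orbit closure of $(L,\omega_L)$ inside $\mathcal Y_H$.

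\textbf{Rigid colourings.} The group $N_G(L)/L$ acts \emph{freely} on $L\backslash G$, and a non-repetitive colouring of the associated graph is preserved by no nontrivial (hence fixed-point-free) such automorphism — this is exactly the combinatorial mechanism used in the proof of Proposition~\ref{pi=st}. When $G$ is finitely generated I would fix a finite symmetric generating set, regard $L\backslash G$ as a Schreier graph of degree $\le d$, take $\omega_L$ to be a non-repetitive colouring with a finite alphabet $K$, $|K|\ge C(d)$, as furnished by Theorem~\ref{rep}, and extend it to $G$ by left $L$-invariance, obtaining $x_L=(L,\omega_L)\in\mathcal Y_H$ with $St_{x_L}=L$. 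The three points that make this work: non-repetitiveness is a \emph{closed} condition in $K^{L\backslash G}$; it is carried by the $G$-action, since left translation by $g$ is an isomorphism of coloured Schreier graphs $L\backslash G\to(gLg^{-1})\backslash G$; and it is stable under Fell-convergence $L_i\to M$ of the root, because the rooted coloured Schreier graphs then converge locally, so a fixed potential repetition would already sit inside $\omega_{L_i}$ for large $i$. Hence every point $(M,\psi)$ of the $G$-orbit closure of $\{x_L:L\in H\}$ has $\psi$ non-repetitive on $M\backslash G$, and the sandwich collapses to $St_{(M,\psi)}=M$.

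\textbf{Assembly.} I would then set $X=\overline{\{g\cdot x_L:\ g\in G,\ L\in H\}}\subseteq\mathcal Y_H$, a compact $G$-space. By the previous step the stabilizer map on $X$ coincides with the continuous projection $(M,\psi)\mapsto M$ onto $\mathcal S(G)$, and its image is $\{M:(M,\psi)\in X\}=H$: the inclusion $\subseteq$ is the previous step, and $\supseteq$ holds since $x_L\in X$ has stabilizer $L$ for every $L\in H$ (equivalently, the projection of $X$ to $\mathcal S(G)$ is $\overline{\{gLg^{-1}:g\in G,\,L\in H\}}=H$). Note that $X$ is in general not minimal, and should not be, since a minimal subsystem could have strictly smaller image; for finitely generated $G$ this argument also reproves and strengthens Theorem~\ref{real}. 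If $G$ is second countable then $\mathcal S(G)$ is metrizable, and taking $\mathcal C$ metrizable — and, for non-discrete $G$, replacing $\mathcal C^{G}$ by the metrizable space of $\mathcal C$-valued colourings of a fixed countable coset transversal — makes $\mathcal Y_H$, hence $X$, metrizable.

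\textbf{Main obstacle.} The genuinely hard step is constructing the rigid colourings in the stated generality. For a finitely generated discrete $G$ non-repetitive colourings do everything; but a non-finitely-generated discrete group has Schreier graphs of unbounded degree, so Theorem~\ref{rep} no longer applies, and a non-discrete locally compact $G$ has no Schreier graph at all. What is needed instead is a ``strongly aperiodic'' $\mathcal C$-valued colouring of the free $N_G(L)/L$-space $L\backslash G$ (or of a transversal) whose strong aperiodicity is a closed condition inherited by $G$-orbit closures and stable as $L$ ranges over $H$; producing such a device, uniformly in $L\in H$, is where the real content of the general case lies, after which the argument proceeds exactly as above.
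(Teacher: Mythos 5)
First, a point of orientation: the paper does not prove this theorem at all — it is quoted from Matte Bon and Tsankov \cite{MB-T}, and the survey only records that their argument proceeds by abstract topological dynamics, following the Ellis--Veech ideas on the freeness of the universal minimal flow $(M(G),G)$. Your colouring scheme is therefore a genuinely different route; it is, in substance, an adaptation of Elek's construction (Theorem \ref{real} together with Proposition \ref{pi=st}) from a URS of a finitely generated group to an arbitrary closed invariant family $H$. In that restricted setting the sketch is essentially sound: the sandwich $L\subseteq St_{(L,\omega)}\subseteq N_G(L)$, the rigidity of non-repetitive colourings against fixed-point-free automorphisms of $L\backslash G$, and the locality of repetitions (so that they survive passage to limits of rooted coloured Schreier graphs) are exactly the mechanisms of Section 2 and Proposition \ref{pi=st}, and they would indeed produce a compact, generally non-minimal, metrizable $G$-space on which the stabilizer map is the (continuous) first projection with image $H$, when $G$ is discrete and finitely generated.

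But as a proof of the stated theorem there is a genuine gap, and it sits precisely where you place your ``main obstacle'': the statement concerns an arbitrary locally compact group. For a countable group that is not finitely generated the coset spaces $L\backslash G$ carry no bounded-degree Schreier graph, so Theorem \ref{rep} gives no rigid colourings; and for non-discrete $G$ the construction breaks down before rigidity is even an issue — the translation action of a non-discrete $G$ on $\mathcal{C}^G$ with the product topology is not jointly continuous, left $L$-invariance need not be closed under Fell limits $L_i\to L$ (as you note), and the proposed remedy of colouring ``a fixed coset transversal'' is not available: transversals for varying closed subgroups cannot be chosen continuously or equivariantly, and $G$ does not act on a fixed transversal. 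Producing a substitute rigid colouring, uniformly in $L\in H$ and stable under limits, is not a detail to be deferred; it is the actual content of the theorem, and without it your argument establishes only a (non-minimal, everywhere-continuous-stabilizer) variant of Theorem \ref{real} for finitely generated discrete groups. To reach the general statement one must either develop such a device from scratch (no LLL-type tool in the paper covers it) or abandon the combinatorial route in favour of the abstract Ellis--Veech-style argument of \cite{MB-T}.
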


Their proof is in the tradition the abstract theory of topological dynamics and follows
the ideas of the theorems of Ellis and Veech on the freeness, for a locally compact $G$,
of the universal minimal system $(M(G), G)$.
They also show that for every URS $Z \subset \mathcal{S}(G)$
there exists a unique universal minimal (usually non-metrizable)
system $(M_Z(G), G)$ for minimal $G$-systems subordinate to $Z$, in the following sense:

\begin{defn}
Let $G$ be a locally compact topological group.
\begin{enumerate}
\item
If $Y$ and $Z$ are URSs of $G$, we write $Y \prec Z$ if for every $H \in Y$, there exists
$K \in W$ such that $K  \le H$.
\item
A minimal system $(X,G)$ is {\em subordinate} to a urs $Z$ if
$Z \prec \mathcal{S}_X$.
\end{enumerate}
\end{defn}

\begin{thm}
For every URS $Z \subset \mathcal{S}(G)$, there exists a minimal
system $M_Z(G)$, unique up to isomorphism, which is subordinate to
$Z$ and is universal for minimal $G$-systems subordinate to $Z$.
Moreover, the stabilizer map $St : M_Z(G) \to Z$ is continuous.
\end{thm}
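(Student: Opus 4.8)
The plan is to construct the universal object first inside the smaller class $\mathcal{C}_Z$ of minimal $G$-systems $(X,G)$ whose stabilizer map $St_X\colon X\to \mathcal{S}(G)$ is everywhere continuous with image exactly $Z$, and then to bootstrap to the class of all minimal systems subordinate to $Z$. First note that $\mathcal{C}_Z\neq\emptyset$: the preceding theorem, applied to the closed invariant set $Z\subset\mathcal{S}(G)$, produces a compact system with continuous stabilizer map onto $Z$; passing to a minimal subsystem keeps $St$ continuous, and its image is then a non-empty closed invariant subset of the minimal system $Z$, hence all of $Z$. Every $X\in\mathcal{C}_Z$ satisfies $\mathcal{S}_X=St_X(X)=Z$ and is in particular subordinate to $Z$. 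Since every minimal $G$-system is a factor of the greatest $G$-ambit, $\mathcal{C}_Z$ has a set of isomorphism representatives.

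Next I would form the fibered product over $Z$,
$$
\widetilde M=\Big\{(x_Y)_{Y\in\mathcal{C}_Z}\in\prod_{Y\in\mathcal{C}_Z}Y\ :\ St_Y(x_Y)\ \text{is independent of}\ Y\Big\},
$$
a non-empty closed $G$-invariant subset of $\prod_Y Y$. The decisive point is that for $(x_Y)\in\widetilde M$ with common value $H\in Z$ of the $St_Y(x_Y)$ one has $G_{(x_Y)}=\bigcap_Y G_{x_Y}=\bigcap_Y H=H$, so the stabilizer map of $\widetilde M$ is the (continuous) projection $\widetilde M\to Z$. Take $M_Z(G)$ to be a minimal subsystem of $\widetilde M$. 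Then $St$ on $M_Z(G)$ is continuous with image a non-empty closed invariant subset of $Z$, hence $=Z$, so $M_Z(G)\in\mathcal{C}_Z$; and the coordinate projection $M_Z(G)\to Y$ is onto for every $Y\in\mathcal{C}_Z$, so $M_Z(G)$ is universal in $\mathcal{C}_Z$. This already yields the \emph{moreover} assertion of the theorem.

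To upgrade universality, let $(W,G)$ be any minimal system subordinate to $Z$. The key consequence of subordinacy to extract is that $\Fix_W(H)=\{w\in W: hw=w\ \forall h\in H\}\neq\emptyset$ for every $H\in Z$ (if $H$ is contained in a member $K=\lim_i G_{w_i}$ of $\mathcal{S}_W$, then any cluster point of the net $(w_i)$ is fixed by $H$, using the defining properties of the Fell topology and joint continuity of the action). Fix $m_0\in M_Z(G)$, put $H=St(m_0)$, choose $w\in\Fix_W(H)$, and let $V$ be the orbit closure of $(w,m_0)$ in $W\times M_Z(G)$. Along the orbit, $G_{(gw,gm_0)}=g(G_w\cap G_{m_0})g^{-1}=gHg^{-1}=St(gm_0)$, since $H\le G_w$; and if a net $g_i(w,m_0)$ converges to $(w',m')\in V$, then from $G_{g_iw}\supseteq G_{g_im_0}$ together with the continuity of $St$ on $M_Z(G)$ one deduces $G_{m'}\le G_{w'}$, whence $G_{(w',m')}=G_{m'}=St(m')$. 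Thus the stabilizer map of $V$ is $St$ composed with the projection of $V$ onto $M_Z(G)$; it is continuous with image $Z$, so any minimal subsystem $V'\subset V$ lies in $\mathcal{C}_Z$ and projects onto both $M_Z(G)$ and $W$. By the previous paragraph $V'$, and hence $W$, is a factor of $M_Z(G)$. Therefore $M_Z(G)$ is subordinate to $Z$ and universal for minimal $G$-systems subordinate to $Z$.

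Finally, uniqueness: if $M$ and $M'$ are two such universal minimal systems there are factor maps both ways, and one concludes $M\cong M'$ by the standard argument for universal objects in topological dynamics — realizing $M_Z(G)$ as a minimal left ideal of an appropriate enveloping semigroup and using that any two such ideals are $G$-isomorphic. I expect the genuine obstacles to be the cluster-point computation in the third paragraph (forcing the stabilizer map of $V$ up to that of $M_Z(G)$) and, above all, this uniqueness step: mutual factor maps alone do not yield an isomorphism of general minimal flows, and making it work requires exactly the kind of enveloping-semigroup analysis that Ellis and Veech used for $M(G)$.
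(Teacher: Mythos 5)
The survey itself gives no proof of this theorem: it is quoted from Matte Bon and Tsankov \cite{MB-T}, with only the remark that their argument follows the Ellis--Veech analysis of the universal minimal flow. So your proposal can only be judged on its own terms. On those terms, the existence part is essentially sound: using the preceding realization theorem to see that the class $\mathcal{C}_Z$ is non-empty, forming the fibered product over $Z$ of a set of representatives, and taking a minimal subflow does produce a minimal system whose stabilizer map is the (continuous) projection to $Z$, onto $Z$ by minimality of $Z$. Your third-paragraph worry is unfounded: the cluster-point computation works exactly as you sketch, since $St(g_im_0)\to St(m')$ by continuity on $M_Z(G)$, the lower half of Fell convergence supplies $h_i\in G_{g_im_0}\subseteq G_{g_iw}$ with $h_i\to h$, and joint continuity of the action gives $h\in G_{w'}$; hence the stabilizer map of $V$ factors through the $M_Z(G)$-coordinate and any minimal $V'\subset V$ lies in $\mathcal{C}_Z$ and maps onto $W$. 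Two small points should be made explicit: the survey's printed definition of $\prec$/subordinate has the inclusion reversed (with the literal reading every essentially free minimal flow, in particular $M(G)$, would be subordinate to every $Z$ and the ``moreover'' clause would fail for nontrivial $Z$), and you silently use the intended orientation; moreover, passing from ``every member of $\mathcal{S}_W$ contains a member of $Z$'' to ``every $H\in Z$ is contained in a member of $\mathcal{S}_W$, hence $\Fix_W(H)\neq\emptyset$'' needs the short minimality-plus-compactness argument (conjugate a single containment $H_0\le K_0$ along a net with $g_iH_0g_i^{-1}\to H$ and pass to Chabauty limits), which you should include.

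The genuine gap is exactly the one you flag: uniqueness. Your construction yields \emph{a} universal object, but nothing in it gives coalescence, and mutual factor maps between two universal subordinated flows do not by themselves yield an isomorphism (there are minimal flows isomorphic to proper factors of themselves, so the standard categorical argument breaks down). The appeal to ``realizing $M_Z(G)$ as a minimal left ideal of an appropriate enveloping semigroup'' is not available for the object you built: a minimal subflow of a large fibered product carries no visible right-topological semigroup structure. To close the gap you would have to either redo the construction inside the greatest ambit $S(G)$ — working with the fixed-point sets of the subgroups $H\in Z$ and using idempotents, which is in effect what Matte Bon and Tsankov do in the Ellis--Veech tradition — and deduce coalescence of the universal subordinated flow there, or prove directly that some universal object in your class is coalescent and then transport uniqueness. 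As written, the uniqueness clause (and with it part of the statement of the theorem) is asserted rather than proved, so the proposal is an incomplete proof even though its existence, universality and continuity components are correct.
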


%{\bf{Part II : Free subshifts of the Bernoulli system $\Om = \{0,1\}^G$}}

\br

\section{Free subshifts of the Bernoulli system $\Om = \{0,1\}^G$}
In \cite{GU} Glasner and Uspenskij asked whether every infinite countable group $G$
admits a free subshift $X \subset \Om$.  Some partial results
towards a positive solution were obtained and the following combinatorial characterization
(due to V. Pestov) was formulated.

%\section{A coloring property}

\begin{thm}\label{char}
Let $G$ be an infinite countable group.
The following conditions are equivalent.
\begin{enumerate}
\item
$G$ acts freely on some subshift of $\{0,1\}^G$.
\item
There exists a $2$-coloring of $G$ with the following property.
For every $g \ne e$, there is a finite set $A = A(g)$ such that
for every $h \in G$ there is an $a \in A \cap g^{-1}A$ such that
$ha$ and $hga$ have different colors.
\end{enumerate}
\end{thm}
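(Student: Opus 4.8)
The plan is to translate both conditions into the language of the Bernoulli system $\Om = \{0,1\}^G$ with its shift $g\om(h) = \om(g^{-1}h)$, using the evident dictionary: a $2$-coloring of $G$ \emph{is} a point $x\in\Om$, a subshift is a closed shift-invariant subset of $\Om$, and the subshift generated by $x$ is the orbit closure $\overline{Gx}$. The one elementary fact driving everything is that for $g\ne e$ and $a\in G$, if $gy=y$ then $y(a)=(gy)(ga)=y(g^{-1}ga)=y(ga)$; hence any $y$ with $y(a)\ne y(ga)$ for some $a$ automatically satisfies $gy\ne y$. Thus ``$X$ is a free subshift'' is equivalent to ``for every $g\ne e$ and every $y\in X$ there is some $a$ with $y(a)\ne y(ga)$'', i.e. $X\cap\Fix(g)=\emptyset$ for all $g\ne e$. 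The second structural observation is that the clause ``$a\in A\cap g^{-1}A$'' in condition (2) is precisely what makes the event $U_{g,A}:=\{z\in\Om:\exists\,a\in A\cap g^{-1}A,\ z(a)\ne z(ga)\}$ a clopen cylinder set, depending only on the restriction of $z$ to the finite set $A$; consequently membership in $U_{g,A}$ is the same property for a point, for every point of its orbit, and for every point of its orbit closure.

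For (2) $\Rightarrow$ (1), first I would take a $2$-coloring $x$ with property (2) and set $X=\overline{Gx}$, a nonempty subshift. Fix $g\ne e$ with associated finite set $A=A(g)$. Since $x(ha)\ne x(hga)$ is the same as $(h^{-1}x)(a)\ne (h^{-1}x)(ga)$, condition (2) says exactly that every point of the orbit $Gx$ lies in $U_{g,A}$. As $U_{g,A}$ is closed, $X=\overline{Gx}\subseteq U_{g,A}$ as well, and by the elementary fact above no element of $U_{g,A}$ is fixed by $g$. Running this over all $g\ne e$ shows $X$ is free, which is (1).

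For (1) $\Rightarrow$ (2), I would start from a free subshift $X\subseteq\Om$, pick any $x\in X$, and claim $x$ satisfies (2) as a $2$-coloring. Fix $g\ne e$. Freeness gives $X\cap\Fix(g)=\emptyset$, and $\Fix(g)^c=\bigcup_{t\in G}\{z:z(t)\ne z(g^{-1}t)\}$ is a union of clopen sets; since $X$ is compact there is a finite $T\subseteq G$ with $X\subseteq\bigcup_{t\in T}\{z:z(t)\ne z(g^{-1}t)\}$. Put $A=A(g):=T\cup g^{-1}T$. For any $y\in X$ choose $t\in T$ with $y(t)\ne y(g^{-1}t)$ and set $a:=g^{-1}t$; then $a\in g^{-1}T\subseteq A$ and $ga=t\in T\subseteq A$, so $a\in A\cap g^{-1}A$, and $y(a)\ne y(ga)$. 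Applying this to $y=h^{-1}x$ (which lies in $X$ by shift-invariance) for each $h\in G$, and rewriting $(h^{-1}x)(a)\ne (h^{-1}x)(ga)$ as $x(ha)\ne x(hga)$, yields precisely the statement of (2). (Note that infiniteness and countability of $G$ are not used; only compactness of $\Om$ is.)

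I do not expect a genuinely hard step here: the single substantive tool is the finite subcover extracted from compactness, which is what upgrades the $y$-dependent choice of a ``discrepancy coordinate'' in $X\cap\Fix(g)=\emptyset$ to the uniform finite set $A(g)$ demanded by (2). The places that require care — and where the bookkeeping is most error-prone — are keeping the shift convention $g\om(h)=\om(g^{-1}h)$ straight when moving between $x$, the translates $h^{-1}x$, and general orbit points (so that the \emph{symmetrized} set $T\cup g^{-1}T$, not $T$ alone, is what one must take), and recognizing that the restriction ``$a\in A\cap g^{-1}A$'' is exactly the device rendering the repetition-type event a clopen cylinder, which is what allows property (2) to pass between the orbit $Gx$ and its closure.
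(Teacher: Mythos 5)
Your proof is correct. Note that the paper itself states this characterization (attributed to V.~Pestov and formulated in \cite{GU}) without giving a proof, so there is nothing to compare line by line; your argument is the natural one and it is complete. Both directions check out under the convention $g\om(h)=\om(g^{-1}h)$: the identity $(h^{-1}x)(a)=x(ha)$ correctly translates condition (2) into ``the orbit $Gx$ is contained in the clopen set $U_{g,A}$,'' whose closedness passes the property to the orbit closure, and the elementary observation that a $g$-fixed point satisfies $z(a)=z(ga)$ for all $a$ then gives freeness. In the converse direction the compactness of the free subshift $X$, the finite subcover $T$ of the clopen cover of $X\subseteq \{z: gz\ne z\}$, and the symmetrization $A(g)=T\cup g^{-1}T$ (needed precisely so that the chosen $a=g^{-1}t$ lies in $A\cap g^{-1}A$) are exactly the right devices; the only implicit convention worth stating is that the subshift in condition (1) is nonempty, which you use when picking the point $x\in X$ that serves as the $2$-coloring.
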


A bit later the same combinatorial characterization was observed by
Gao, Jackson and Seward in \cite{GJS}, where a positive answer to this question is
given by means of an intricate bare hands construction.
\begin{thm}
%[Gao-Jackson-Seward]
\label{char}
Every countable infinite group has the $2$-coloring property.
\end{thm}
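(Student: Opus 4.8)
The plan is to construct, via the Lov\'asz Local Lemma, a single coloring of $G$ with a uniform aperiodicity property and to read the sets $A(g)$ off of it; this is the strategy of Aubrun--Barbieri--Thomass\'e, in the spirit of the proof of Theorem \ref{rep}. \emph{Reduction.} Call $x\in\{0,1\}^G$ \emph{robustly aperiodic} if for every $g\ne e$ there is a finite $W_g\subset G$ such that for every $h\in G$ there is an $f\in W_g$ with $x(hf)\ne x(hgf)$. If such an $x$ exists, then the coloring $c=x$ has the $2$-coloring property: put $A(g):=W_g\cup gW_g$, and given $g\ne e$ and $h\in G$, choose $f\in W_g$ with $x(hf)\ne x(hgf)$ and set $a:=f$; then $a\in W_g\subseteq A(g)$ and $ga=gf\in gW_g\subseteq A(g)$, so $a\in A(g)\cap g^{-1}A(g)$, while $c(ha)=x(hf)\ne x(hgf)=c(hga)$. (Incidentally the orbit closure $\overline{Gx}$ is then a free subshift, but we shall not need that.) So it suffices to produce a robustly aperiodic $x$.

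\emph{The probabilistic setup.} Enumerate $G\setminus\{e\}=\{\ga_1,\ga_2,\dots\}$ and fix an increasing chain of finite symmetric sets $\{e\}\subseteq V_1\subseteq V_2\subseteq\cdots$ with $\bigcup_nV_n=G$; since $G$ is infinite, $|V_n|\to\infty$. Choose indices $n_1<n_2<\cdots$, growing fast enough (specified at the end), and put $W_i:=V_{n_i}$. Take $x$ uniform in $\{0,1\}^G$, and for $i\ge1$ and $h\in G$ consider the bad event
$$
B_{i,h}:=\{x\in\{0,1\}^G:\ x(hf)=x(h\ga_if)\ \text{for all}\ f\in W_i\}.
$$
Avoiding every $B_{i,h}$ is exactly robust aperiodicity, with $W_{\ga_i}=W_i$. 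Two estimates feed the Local Lemma. First, choosing $W_i'\subseteq W_i$ maximal with $W_i'\cap\ga_iW_i'=\emptyset$ gives $|W_i'|\ge|W_i|/3$, and the two-element sets $\{hf,h\ga_if\}$, $f\in W_i'$, are pairwise disjoint, so $\Pr[B_{i,h}]\le 2^{-|W_i|/3}=:p_i$. Second, $B_{i,h}$ depends only on the coordinates of $x$ in $U_{i,h}:=hW_i\cup h\ga_iW_i$, hence is independent of every $B_{j,h'}$ with $U_{j,h'}\cap U_{i,h}=\emptyset$; and since $U_{j,h'}\cap U_{i,h}\ne\emptyset$ confines $h'$ to a union of four translates of $W_iW_j^{-1}$, the number of $h'$ for which $B_{j,h'}$ is dependent on $B_{i,h}$ is at most $\Del_{ij}:=4\,|W_i|\,|W_j|$.

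\emph{Applying the Local Lemma and passing to the limit.} As in the proof of Theorem \ref{rep}, first fix a finite $V_N$ and keep only the finitely many $B_{i,h}$ with $U_{i,h}\subseteq V_N$, regarded as events on the finite coordinate set $V_N$; apply Theorem \ref{LLL} with classes $\mathcal{A}_i$, probabilities $p_i$, degrees $\Del_{ij}$, and $a_i:=2^{-|W_i|/6}$. Once $|W_i|\ge6$ we have $(1-a_i)\ge\exp(-2a_i)$, and condition (2) of Theorem \ref{LLL} reduces to $2^{-|W_i|/3}\le 2^{-|W_i|/6}\exp(-8|W_i|\Sigma)$, where $\Sigma:=\sum_{j\ge1}|W_j|\,2^{-|W_j|/6}$; this holds provided $\Sigma\le(\ln2)/48$, which, since $t\,2^{-t/6}\to0$, is secured by letting the $n_i$ grow fast enough, uniformly in $N$. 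The Local Lemma then produces a coloring $x_N$ of $V_N$ avoiding all the retained events; extending $x_N$ arbitrarily to $G$ and taking a pointwise limit $x$ of a subsequence $x_{N_k}$ in the compact space $\{0,1\}^G$, every $B_{i,h}$ is avoided by $x$, since each is decided inside the finite set $U_{i,h}\subseteq V_{N_k}$ for large $k$. Thus $x$ is robustly aperiodic, and the reduction completes the proof.

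\emph{Main obstacle.} The crux is the verification of condition (2) of Theorem \ref{LLL}: one must observe that the dependency degrees $\Del_{ij}$ grow only polynomially in the window sizes $|W_i|$ while the forbidden-pattern probabilities $p_i$ decay exponentially, so that letting $W_i=V_{n_i}$ grow sufficiently fast turns the dangerous product $\prod_j(1-a_j)^{\Del_{ij}}$ into (the exponential of) a convergent series that can be made as small as needed — this estimate is what replaces the bare-hands construction of \cite{GJS}. The remaining points requiring care are the left/right-translation conventions in the reduction and the routine truncation-and-compactness passage from the finite Local Lemma to the infinite group $G$.
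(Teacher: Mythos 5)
Your proof is correct and follows essentially the same route as the paper's: the Aubrun--Barbieri--Thomass\'e scheme of bad events $B_{i,h}$ with exponentially small probabilities and polynomially bounded dependency degrees, the Local Lemma applied to finite truncations, and a compactness passage to the limit, plus the same translation of the resulting aperiodicity into the $2$-coloring property via $A(g)=W_g\cup gW_g$. The only deviations are cosmetic: the paper enforces $s_nT_n\cap T_n=\emptyset$ with $|T_n|=Cn$ so that the event probabilities are exactly $2^{-Cn}$, whereas you drop the disjointness and instead extract a subfamily $W_i'$ with $|W_i'|\ge |W_i|/3$ of disjoint coordinate pairs and let the windows grow fast, which works just as well.
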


Recently, using the LLL as a main tool, Aubrun, Barbieri and Thomass\'{e} \cite{ABT},
gave an alternative proof, a variation of which we will next present.
 (See also the work of Bernshteyn \cite{Ber}.)

\begin{proof}[Proof of the Gao-Jackson-Seward theorem]
Let $G =\{e=s_0, s_1, \dots, s_n,\dots\}$ be an inumeration of $G$.
Choose a sequence of subsets of $G$, $\{T_n\}_{n \in \N}$ with $|T_n| = Cn$, for some
natural number $C$ which will be determined later, and such that
$s_n T_n \cap T_n =\emptyset$ for all $n \in \N$.
Let $\Om = \{0,1\}^G$ and let $\mu =(\frac12 (\del_0 + \del_1))^G$ be the Bernoulli  measure on $\Om$.
For $g \in G$ we let
\begin{gather*}
A_{n,g} = \{\om \in \Om : \om \rest gT_n = \om \rest  gs_n  T_n\}, \\
\mathcal{A}_n = \{A_{n,g}\}_{g \in G}, \quad {\text{and}} \quad
\mathcal{A} = \bigcup_{n \in \N} \mathcal{A}_n.
\end{gather*}
Note that $\mu(A_{n,g})= 2^{-Cn}$.

Our purpose is to construct an element $\om_0 \in \Om$
such that for every $e \not= g \in G$ and every $\om \in \ol{\{g\om_0 : g \in G\}}$,
$g\om \not= \om$. Once we have such $\om_0$ any minimal subset
$X \subset \ol{\{g\om_0 : g \in G\}}$ will provide the required minimal free subshift.
To this end we will construct a point $\om_0 \in \Om$ such that for all $n \in \N$ and all $g \in G$,
$\om_0 \not \in A_{n,g}$.

The construction will proceed by repeated application of the LLL to the increasing sequence of finite subsets
$X_n = G_n = \{e=s_0, s_1, \dots, s_n\} \nearrow G$, and then by using a compactness argument.
Thus, it suffices to show that for a fixed $n$, the LLL inequality,
applied to $\mathcal{A} = \mathcal{A}_1 \cup \mathcal{A}_2 \cup\cdots \cup \mathcal{A}_n$, restricted to those $g \in G$
such that
$$
gT_k \cup gs_k T_k \subset X_n, \quad 1 \le k \le n
$$
is satisfied in $\{0,1\}^{X_n}$.

Thus we now have to check the LLL inequalies
$$
p_k \le a_k \prod_{l=1}^n (1- a_l)^{\Del_{k l}},   \quad 1 \le k \le n,
$$
on $\mathcal{A} = \bigcup_{k =1}^n \mathcal{A}_k$,
as follows:
\begin{enumerate}
\item[(i)]   we have $p_k = \mu(A) = 2^{-Ck}$ for all $A \in \mathcal{A}_k$,
\item[(ii)]  we choose $a_k =  2^{-\frac{Ck}{2}}$ ,
\item[(iii)]  and the numbers $\Del_{l k}$ are such that
 for every $k$ and every $A \in \mathcal{A}_k$, $A$ is independent of all $A' \in \mathcal{A}$ except for
$A'$ in a subset $\mathcal{D}_A \subset \mathcal{A}$ with
$|\mathcal{D}_A \cap \mathcal{A}_l | \le \Delta_{k,l}$, for $1 \le l \le n$.
\end{enumerate}

We next compute the numbers $\Delta_{k,l}$. So fix $g_0 \in G$
and for $A_{k, g_0}$ we need to compute
$\#\{A_{l,g}  \in \mathcal{A}  :  \  A_{l,g} \ {\text{ not independent of}} \ A_{k, g_0}\}$.
This condition means that the sets $gT_j \cup g s_l T_l$ and $g_0T_k \cup g_0 s_k T_k$ intersect
(e. g., if $t_1 \in T_l, \  t_2 \in T_k$ and
$gs_l t_1 = g_0 t_2$, then
$g = g_0 t_2t_1^{-1} s_l^{-1}$, etc.).
These sets have $2Cl$ and $2Ck$ elements respectively, whence we can take
$\Delta_{k,l} = 4C^2lk$.

Thus, the inequalities we have to check are:
\begin{equation}\label{in}
2^{-Ck} \le  2^{-\frac{Cn}{2}} \prod_{l=1}^n (1- 2^{-\frac{Cl}{2}})^{4C^2lk},
  \quad 1 \le k \le n.
\end{equation}

Taking into account the inequality $2^{-2x} \le 1 -x$, which holds for sufficiently small
nonnegative $x$, we get:
$$
1 - 2^{-\frac{Cl}{2}}  \ge 2^{-2 \cdot 2^{-\frac{Cl}{2}}},
$$
whence
$$
(1 - 2^{-\frac{Cl}{2}})^{4C^2lk}  \ge 2^{-8  C^2 lk 2^{-\frac{Cl}{2}}}.
$$
Thus in checking inequality (\ref{in}) it suffices to show that
$$
\prod_{l=1}^n (1- 2^{-\frac{Cl}{2}})^{4C^2lk} \ge
 2^{-8  C^2 k \sum_{l=1}^n 2^{-\frac{Cl}{2}}} \ge
 2^{-8  C^2 k \sum_{l=1}^{\infty} 2^{-\frac{Cl}{2}}}.
$$
As this is obviously the case for large enough $C$, our proof is complete.
\end{proof}

\br

\section{Invariant measures for minimal systems}

Recall that a countable group $G$ is {\em amenable} if whenever it acts continuously on a compact
space $X$ there is at least one probability measure $\mu$ on $X$ that is invariant under the action of $G$.
For nonamenable groups acting on $X$ there may or may not exist invariant probability measures.
In the memoir \cite{GJS2} where many kinds of subshifts are constructed the authors mention the
following problem that was raised by Ralf Spatzier:
\begin{prob}[11.2.6 in \cite{GJS2}]
Can the fundamental method be improved in order to construct a variety of subflows of $2^G$ which
support ergodic probability measures ?
\end{prob}

In particular one can ask does there exist any {\em free} subshift of $\{0, 1\}^G$ which supports an invariant
measure.
In the same paper \cite{Elek}, that we discussed earlier, G. Elek gives a positive answer in case the group $G$ is finitely generated
and sofic. In \cite{Ber} A. Bernshteyn proves more generally:

\begin{thm}\cite[Theorem 1.1]{Ber}
If $\Ga$ is a countably infinite group, then there exists a closed invariant subset
$X \subset \{0, 1\}^\Ga$
such that the action of $\Ga$ on $X$ is free and admits an invariant probability measure.
\end{thm}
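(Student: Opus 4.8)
The plan is to combine the free-subshift construction of the Gao--Jackson--Seward theorem with a probabilistic version of the Lov\'{a}sz Local Lemma that simultaneously guarantees positive measure rather than just nonemptiness. The key point is that the compactness-plus-LLL argument presented above produces a single point $\om_0$, but if instead one applies the LLL not to produce a point but to produce a \emph{shift-invariant probability measure} supported on the ``good'' configurations, one obtains a free subshift carrying an invariant measure. Concretely, fix an enumeration $G = \{e = s_0, s_1, \dots\}$ and, exactly as in the proof above, choose sets $T_n$ with $|T_n| = Cn$ and $s_n T_n \cap T_n = \emptyset$, and define the bad events $A_{n,g} = \{\om : \om\rest gT_n = \om \rest g s_n T_n\}$ with $\mu(A_{n,g}) = 2^{-Cn}$ for the Bernoulli measure $\mu = (\tfrac12(\del_0+\del_1))^G$. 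The set $B = \bigcap_{n,g} A_{n,g}^c \subset \Om$ is shift-invariant and, by the argument above, nonempty; every point of $B$ generates a free subshift. The task is to put an invariant measure on (a subsystem of) $B$.

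First I would invoke the \emph{Lov\'{a}sz Local Lemma for measures} (the ``LLL measure'' or Moser--Tardos-type / Bernshteyn's measurable LLL): under exactly the LLL hypotheses verified in the proof above --- with $p_k = 2^{-Ck}$, $a_k = 2^{-Ck/2}$, and dependency bounds $\Del_{k,l} = 4C^2 lk$, which are satisfied for $C$ large --- there is a probability measure $\lambda$ on $\Om = \{0,1\}^G$, absolutely continuous with respect to $\mu$ in a controlled way (indeed with density bounded below on cylinders, by the standard LLL lower bound $\lambda \ge \prod (1-a_i) \cdot \mu$ restricted to good events), such that $\lambda(A_{n,g}) = 0$ for all $n,g$; equivalently $\lambda(B) = 1$. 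The construction of $\lambda$ is carried out on the finite approximations $X_n = \{s_0,\dots,s_n\}$, using the finitary LLL on $\{0,1\}^{X_n}$ as in the proof above, and then passing to a weak-$*$ limit; the quantitative LLL lower bound on probabilities of cylinder events ensures the limit measure still assigns zero mass to each $A_{n,g}$. This $\lambda$ need not be shift-invariant, but it is supported on the shift-invariant set $B$.

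Next I would average to obtain invariance. Because $G$ may be nonamenable, one cannot simply average $\lambda$ over F\o lner sets; instead one uses the fact that the \emph{space of probability measures on the closed invariant set $\ol{B}$} is a nonempty compact convex $G$-space, and one seeks a fixed point. The cleanest route: the construction should be made \emph{equivariant from the start}. One arranges the choice of $T_n$ and the finite approximations so that the resulting measure $\lambda$ is obtained as a $G$-equivariant (measurable) object --- Bernshteyn's measurable LLL precisely yields, from a $G$-equivariant Borel instance of a covering problem on a standard probability space, a $G$-invariant solution --- and then the pushforward of a $G$-invariant measure under the $G$-equivariant solution map is automatically $G$-invariant. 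Thus one works over an auxiliary free p.m.p.\ $G$-system $(\Om_0,\mu_0,G)$ (e.g.\ a Bernoulli base), builds an equivariant measurable coloring $\Phi : \Om_0 \to \{0,1\}^G$ landing in $B$ via the measurable LLL applied to the (bounded-degree, equivariant) dependency structure of the events $A_{n,g}$, and sets $\nu = \Phi_*\mu_0$. Then $\nu$ is a $G$-invariant probability measure on $\{0,1\}^G$ concentrated on $B$; finally, restrict to an ergodic component and to the (closed, invariant, measure-one) subsystem $X = \supp(\nu) \subset B$, which is free because $X \subset B$.

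The main obstacle is exactly this last equivariance issue: the plain LLL gives only a point or a non-invariant measure, and for nonamenable $G$ there is no averaging trick, so the whole argument must be rerun inside the framework of the measurable (Borel/equivariant) Lov\'{a}sz Local Lemma, checking that the dependency graph of the events $\{A_{n,g}\}$ --- whose edges, as computed in the proof above, connect $A_{k,g_0}$ to $A_{l,g}$ only when $gT_l \cup gs_l T_l$ meets $g_0 T_k \cup g_0 s_k T_k$ --- is a locally finite Borel graph on the p.m.p.\ base with the quantitative degree bounds $\Del_{k,l} = 4C^2 lk$, and that these satisfy the measurable-LLL criterion for $C$ large (the same convergent-series estimate $\sum_l l\,2^{-Cl/2} < \infty$ used above). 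Once that verification is in place, the measurable LLL delivers the equivariant solution and hence the invariant measure, completing the proof.
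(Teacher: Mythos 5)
Your proposal is correct and follows essentially the same route as the paper's sketch of Bernshteyn's argument: apply the measurable (equivariant) Lov\'{a}sz Local Lemma from \cite{Ber0} to the Gao--Jackson--Seward constraint system over a free Bernoulli p.m.p.\ base, obtain an equivariant name map landing in the good set $B=\bigcap_{n,g}A_{n,g}^c$, and push forward the base measure to get an invariant measure whose closed support is the desired free subshift. Your intermediate non-invariant weak-$*$ limit measure $\lambda$ is just a detour that you yourself correctly discard in favor of the equivariant formulation, so the final argument matches the paper's.
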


He bases his proof of this result on a certain measurable version of the LLL that he establishes in \cite{Ber0}.
This says in essence that when we have a system of constraints on elements of $\{0, 1\}^G$ that guarantee
the freeness of the shift action as in the proof of the Gao-Jackson-Seward theorem, then for the
Bernoulli shift of $G$ over $[0, 1]^G$ with $\mu$ the product of Lebesgue measure, there is a measurable set
$A \subset [0, 1]^G$ with positive measure such that for $\mu$-a.e. $\om \in [0, 1]^G$ the name map $\pi$
from $[0, 1]^G \to \{0, 1\}^G$, given by
$\pi(\om)(g) = \ch_A(\sig_g\om)$, satisfies all of the constraints.
It then follows that the image of $\mu$ under $\pi$ gives the required invariant measure on the free subshift
which is the closed support of $\pi_*\mu$.
This method, as well as that of Elek, do not give minimal subshifts carrying an invariant measure.

In \cite{W}, for any countable group $G$, a
%very general
minimal system was constructed whose invariant measures represent any ergodic free measure preserving
action of $G$. The system is zero  dimensional  and thus, using any clopen subset, one gets a minimal subshift
$X$ of $\{0, 1\}^G$ with a host of invariant measures.
There is no guarantee that all points of this subshift are free, but there is certainly an
abundance of free points. It seems that the question as to the existence of a free minimal subshift
of $\{0, 1\}^G$ that supports an invariant probability measure for an arbitrary group is still open. 
\br

\end{document}